\documentclass[reqno]{amsart}
\usepackage[foot]{amsaddr}
\usepackage[utf8]{inputenc}
\usepackage{amsmath}
\usepackage{amsfonts}
\usepackage{layout}
\usepackage{amssymb}
\usepackage{csquotes}
\usepackage{graphicx} 
\usepackage{bbm}

\usepackage{biblatex}
\usepackage{xcolor}

\usepackage{geometry}

\usepackage[colorlinks]{hyperref}

\theoremstyle{plain}
\newtheorem{theorem}{Theorem}[section]

\newtheorem{lemma}[theorem]{Lemma}
\newtheorem*{lemma*}{Lemma}

\newtheorem{remark}[theorem]{Remark}
\DeclareMathOperator{\supp}{supp}

\newtheorem*{assumption*}{\assumptionnumber}
\providecommand{\assumptionnumber}{}
\makeatletter
\newenvironment{assumption}[2]
 {%
  \renewcommand{\assumptionnumber}{Assumption #1}
  \begin{assumption*}%
  \protected@edef\@currentlabel{#1}%
 }
 {%
  \end{assumption*}
 }
\makeatother

\author{Matteo Cagnotti}
\address{Department of Mathematics “G. Peano”, University of Turin, Via Carlo Alberto 10, 10123 Torino, Italy.}
\email{matteo.cagnotti@unito.it}
\title[\resizebox{4.5in}{!}{$L^p$-sup Convergence of the Euler-Maruyama Scheme for SDEs with Distributional Besov Drift}]{$L^p$-sup Convergence of the Euler-Maruyama Scheme for SDEs with Distributional Besov Drift}
\begin{document}
\begin{abstract}
In this paper we extend existing results on the numerical approximation of one-dimensional SDEs with drift in a negative order Besov space and driven by Brownian motion. Using the Yamada-Watanabe approximation technique, we prove rates in $L^p$, for all $p\geq 2$, applying a Gronwall-type lemma previously used in the literature for SDEs with H\"older continuous coefficients. Additionally, we obtain an explicit convergence rate in the $L^1$-$\sup$ norm.
\end{abstract}
\keywords{Distributional drift, Euler-Maruyama scheme, Strong rate of convergence, Besov space, stochastic differential equation, numerical scheme}
\subjclass{Primary 65C30; Secondary 60H35, 65C20, 46F99}
\maketitle
\section{Introduction and Main Results}
Stochastic differential equations (SDEs) with irregular or distributional drift have been the subject of sustained analysis in recent years. Such equations arise in models with rough forcing or highly singular interactions.
When the drift belongs only to a negative-order Besov or fractional Sobolev space, classical results for strong existence, uniqueness, and numerical approximation break down, and the interpretation of the equation
\begin{equation}\label{SDE:original}
    X_t = X_0 + \int_0^t b(s,X_s)\,ds + W_t
\end{equation}
requires non-standard analytical tools.

For function-valued drifts of low regularity and Brownian noise $W$, Zvonkin \cite{zvonkin_transformation_1974} and Veretennikov \cite{veretennikov_strong_1981} showed that a suitable solution to a specific PDE removes the drift, a technique nowadays referred to as Zvonkin transformation.
The first works applying this idea to time-homogeneous distributional drifts and Brownian noise are \cite{flandoli_sdes_2003,flandoli_sdes_2004}, in parallel to \cite{bass_stochastic_2001} where a different class of distributional drifts was treated using Dirichlet processes. More recently in \cite{flandoli_multidimensional_2017,delarue_rough_2016}, the time-inhomogeneous case was considered and subsequently extended to $\alpha$-stable noise in \cite{raynal_multidimensional_2022}. In the case of non-Markovian noise, such as fractional Brownian motion, PDE techniques cannot be applied. The first work in this setting is \cite{catellier_averaging_2016}, which develops an averaging operator along fractional Brownian trajectories to obtain pathwise well-posedness and regularisation by fractional noise for ODEs with distributional drift. This has been refined in subsequent works such as \cite{anzeletti_regularisation_2023}, which is based on limiting arguments and stochastic sewing techniques, first introduced in \cite{le_stochastic_2020}.
 There are other works complementing these results, such as \cite{cannizzaro_multidimensional_2018,perkowski_quantitative_2023,fitoussi_heat_2024}, as well as results in the $L^q_tL^p_x$ setting, for example \cite{krylov_strong_2005,zhang_strong_2005,zhang_stochastic_2010,beck_stochastic_2019,wei_stochastic_2023,wei_stochastic_2025}.

 The well-posedness of the equation studied in the present paper was developed in \cite{issoglio_pde_2024,issoglio_stochastic_2024}, where the authors refined the concept of \emph{virtual solutions} originally established in \cite{flandoli_multidimensional_2017}, for SDEs with drift in $C_T\mathcal{C}^{(-\beta)+}$, $\beta\in(0,\tfrac12)$ and Brownian noise. The precise definition of the Besov spaces $\mathcal{C}^{(-\beta)+}$ is given in Section \ref{sub:notationspacesnorms} below.
The idea is to apply a Zvonkin-type transformation, namely solve a Kolmogorov-type PDE to construct a function $\phi$ that removes the singular drift. Afterwards, solve a classical SDE with regular coefficients for the transformed process $Y_t = \phi(t,X_t)$, and then define the virtual solution as $X_t = \psi(t,Y_t)$, where $\psi$ is the inverse of $\phi$. In this way one obtains a canonical notion of solution even though the original drift is only a distribution and the term $\int_0^t b(s,X_s)\,ds$ is not directly well-defined.
\cite{issoglio_stochastic_2024} showed that \eqref{SDE:original} admits a unique virtual solution in any dimension, and it was later proved in \cite{chaparro_jaquez_convergence_2025} that this solution coincides with a strong probabilistic solution in the special case of dimension 1.

On the numerical side, for classical Lipschitz drifts, strong convergence of Euler-type schemes is well understood; cf.\ \cite{kloeden_numerical_1992}. There is a rich literature investigating convergence when the drift is irregular but still a function and the noise is Brownian, see for example \cite{yan_euler_2002,wang_euler-maruyama_2023,ngo_strong_2016,neuenkirch_adaptive_2019,neuenkirch_eulermaruyama_2021,gottlich_euler_2019}, and \cite{szolgyenyi_stochastic_2021} for a review.

The first work investigating the convergence on Euler-type schemes for one-dimensional SDEs with distributional drift and Brownian noise is \cite{de_angelis_numerical_2022} working in fractional Sobolev spaces, followed by \cite{chaparro_jaquez_convergence_2025} in Besov spaces of negative order, obtaining rates for the $L^1$ error. The details of their proofs are expanded below since their $L^1$ result will be used in the proofs of this work.

Recently, stochastic sewing-based techniques have allowed to obtain improved rates with Hölder continuous drifts and non-Markovian noise in the time-homogeneous case \cite{butkovsky_approximation_2021}, then extended to the time-inhomogeneous case \cite{bao_randomised_2025}, and in the case of Brownian noise with functional, but only integrable drift \cite{le_taming_2025}. In the case of fractional Brownian noise, a tamed Euler scheme for SDEs driven by fBm with drift in Besov spaces (including distributional drifts) is analysed in \cite{goudenege_numerical_2025}, where strong convergence rates depending on both the drift regularity and the Hurst parameter are obtained.

A strategy that has proven effective in the distributional drift setting, for both Brownian and fractional noise, is to replace the drift $b$ with a mollified drift $b^m$ 
 and then apply an Euler scheme to the mollified solutions. In the Brownian case considered here this corresponds to the classical Euler–Maruyama scheme. More precisely, the mollified SDE is
\begin{equation}
    \label{SDE:mollified}
    X^m_t = X_0 + \int_0^t b^m(s, X_s^m)\,ds + W_t,
\end{equation}
where $m\in\mathbb{N}$ is the mollification parameter, and the corresponding numerical discretisation is
\begin{equation}
    \label{SDE:mollifieddiscretised}
    X^{m,n}_t = X_0 + \int_0^t b^m(t_{k(s)}, X^{m,n}_{t_{k(s)}})\,ds + W_t,
\end{equation}
where $n\in\mathbb{N}$ is the number of time steps, $h := T/n$, $t_k := kh$ for $k=0,\dots,n$, and
\[
    k(s) := \max\{k\in\{0,\dots,n\}: t_k\le s\}.
\]

The aim of \cite{de_angelis_numerical_2022,chaparro_jaquez_convergence_2025} is to prove a bound of the form $\sup_{t\in[0,T]}\mathbb{E}|X_t - X^{m(n),n}_t|\le n^{-r}$, for some suitable rate $r$.  Explicit bounds are obtained after balancing the mollification parameter $m$ as a function of the numerical scheme step size $n$ for maximum convergence obtaining the following rate:
\begin{equation}
        \label{eq:rate}
        r=r(\beta,\varepsilon) := \frac{(\frac{1}{2} - \beta -\varepsilon)^2}{1+\beta + \varepsilon+ 2(\frac{1}{2} - \beta -\varepsilon)^2}.
\end{equation}
This rate is proved by controlling the difference between the transformed processes $Y$ and $Y^{m,n}$ arising from the virtual solution framework that yields well-posedness. Both the definition of virtual solution and the dynamics of the transformed process $Y$ are recalled in Section \ref{virtualsols}.

In those works the limitation to the $L^1$ error arises from the fact that the diffusion coefficient of the transformed process $Y$ possesses only spatial H\"older continuity, this prevents the direct use of It\^o or Tanaka formulas for the absolute value of the error in stronger $L^p$ norms.

To overcome the problems arising from low regularity of the diffusion coefficient and to study uniqueness of solutions to SDEs, Yamada and Watanabe \cite{watanabe_uniqueness_1971} introduced smooth absolute-value approximations (recalled in Section~\ref{subsec:YW-technique}).
Other authors have adapted this idea to study the convergence of numerical schemes with Hölder diffusion coefficient allowing It\^o calculus to be applied to the study of the $L^p$ error, for $p\geq1$; cf.\ \cite{gyongy_note_2011,ngo_strong_2016}.
These approximations produce error equations involving integral terms of the form
\[
\int_0^t |Y_s - Y^{m,n}_s|^\eta\,ds,
\]
for suitable exponents $\eta>1$, which can be handled using a Gronwall-type lemma from \cite{gyongy_note_2011}, restated below as Lemma~\ref{GR_Lemma32}.

The present article adapts this methodology to the one-dimensional Besov-drift
setting of \cite{chaparro_jaquez_convergence_2025}, yielding new $L^p$ convergence
results for the Euler–Maruyama approximation of \eqref{SDE:original}. 

Throughout
the paper we work under the following standing assumption on the drift.

\begin{assumption}{1}{A}
    \label{hypothesis:b}
    The drift satisfies
    \[
        b \in C_T^{\frac{1}{2}} \mathcal{C}^{(-\beta)+}
    \]
    for some $\beta \in (0,\frac{1}{2})$.
\end{assumption}
Under this assumption, in Theorem \ref{rate:Lp} we prove that for every $p\ge 2$ and every
$\varepsilon\in\bigl(0,\tfrac12-\beta\bigr)$ the scheme converges in the
$L^p$–sup norm.
\begin{theorem}[$L^p$-$\sup$ Convergence]
        \label{rate:Lp}
    Let Assumption \ref{hypothesis:b} hold. Then, for any choice of $p\geq 2$ and $\varepsilon\in(0,\frac12 - \beta)$ there exists a constant $C>0$ such that
    \begin{equation}
        \mathbb{E}\left[\sup_{t\in[0,T]} |X_t - X^{m(n),n}_t|^p\right]^{\frac{1}{p}}\leq C n^{-\frac{1}{p}r(\beta,\varepsilon)},
    \end{equation}
    where $r(\beta,\varepsilon)$ was defined in \eqref{eq:rate}.
\end{theorem}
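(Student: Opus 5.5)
We will work at the level of the transformed processes and then pull the estimate back to $X$. Let $\phi=\phi^{(m)}$ solve the Kolmogorov-type PDE from the virtual-solution framework recalled in Section~\ref{virtualsols}, and set $Y_t=\phi(t,X_t)$ and $Y^{m,n}_t=\phi^m(t,X^{m,n}_t)$, where $\phi^m$ is the analogous transform built from $b^m$. Then $Y$ solves an SDE whose diffusion coefficient $\sigma(t,y)=\nabla\phi(t,\psi(t,y))$ is only spatially H\"older continuous, with exponent $\tfrac12+\tfrac{1}{2}(\tfrac12-\beta-\varepsilon)$ or better, while the drift is Lipschitz. Moreover $\psi(t,\cdot)$ is uniformly Lipschitz in $t$ and $m$. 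Therefore
\[
\sup_t|X_t-X^{m,n}_t|\lesssim \sup_t|Y_t-Y^{m,n}_t| + \sup_{t,y}|\psi-\psi^m|,
\]
and the second term is controlled by the known mollification estimate $\|b-b^m\|$ in a slightly weaker Besov norm. It therefore suffices to bound $\mathbb{E}\sup_t|Y_t-Y^{m,n}_t|^p$.

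Next we write the error equation for $Z=Y-Y^{m,n}$. Its components are: a Lipschitz drift difference; residual terms coming from $b-b^m$ and from the time discretisation (evaluation at $t_{k(s)}$, which contributes via the $C^{1/2}_T$ regularity of $b$ and the usual $h^{1/2}$ increments of $X^{m,n}$); and a stochastic integral with integrand $\sigma(s,Y_s)-\sigma^m(s,\cdot)$. For $p\ge2$ we apply It\^o's formula to $|Z_t|^p$ directly; this is smooth, so the Yamada--Watanabe functions are needed only in the $L^1$-type step. The martingale term is handled with Burkholder--Davis--Gundy after taking the supremum. The quadratic-variation term produces
\[
\int_0^t |Z_s|^{p-2}|\sigma(Y_s)-\sigma(Y^{m,n}_s)|^2\,ds \lesssim \int_0^t |Z_s|^{p-2+2\alpha}\,ds,\qquad 2\alpha>1 .
\]
This is an integral of $|Z|^\eta$ with $\eta<p$, and interpolation between $|Z|^{p}$ and $|Z|$ gives at most $\mathbb{E}|Z|$ raised to a power, times moments. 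The residual terms are bounded by powers of $n$ and $m$, as in \cite{chaparro_jaquez_convergence_2025}.

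The key step is then to close the estimate using the $L^1$ result, rather than a full Gronwall argument in $L^p$. Uniform moment bounds give $\sup_{m,n}\mathbb{E}\sup_t|Z_t|^{q}<\infty$ for every $q$. By H\"older/interpolation we get
\[
\mathbb{E}|Z_s|^{\eta}\le (\mathbb{E}|Z_s|)^{\theta}(\mathbb{E}|Z_s|^{q})^{1-\theta}.
\]
Feeding in $\sup_s\mathbb{E}|Z_s|\lesssim n^{-r}$ from \cite{chaparro_jaquez_convergence_2025}, with $m=m(n)$ balanced as there, and using Lemma~\ref{GR_Lemma32} to absorb the remaining $|Z|^p$ terms, yields $\mathbb{E}\sup_t|Z_t|^p\lesssim n^{-r}$. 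Taking the $p$-th root gives exactly the claimed rate $n^{-r/p}$, which explains the factor $\tfrac1p$ in Theorem~\ref{rate:Lp}.

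The main obstacle is to show that every residual term decays at least like $n^{-r}$, not slower, after the interpolations. This concerns in particular the H\"older-diffusion term, where the exponent loss must be compensated, and the supremum inside the expectation. For the latter I would first try BDG combined with Young's inequality, absorbing $\tfrac12\mathbb{E}\sup|Z|^p$ into the left-hand side; Lemma~\ref{GR_Lemma32} would then serve as a fallback if a direct absorption fails.
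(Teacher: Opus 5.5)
\textbf{Gap in the closing step.} The argument breaks where you close the estimate. With $\eta=p-2+2\alpha\in(1,p)$, the interpolation $\mathbb{E}|Z_s|^{\eta}\le(\mathbb{E}|Z_s|)^{\theta}(\mathbb{E}|Z_s|^{q})^{1-\theta}$ forces $\theta=\frac{q-\eta}{q-1}<1$. Inserting $\sup_s\mathbb{E}|Z_s|\lesssim n^{-r}$ therefore gives only $n^{-\theta r}$ for the H\"older-diffusion contribution, and hence for $\mathbb{E}\sup_t|Z_t|^p$, not $n^{-r}$. Your sentence claiming $\mathbb{E}\sup_t|Z_t|^p\lesssim n^{-r}$ does not follow, and your ``main obstacle'' paragraph in effect concedes this.

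You also misread the role of Lemma~\ref{GR_Lemma32}. It is not a device for absorbing leftover $|Z|^p$ terms. Its whole content is that the $p$-th moment of the supremum is controlled \emph{linearly} by $\int_0^T\mathbb{E}[Z_s]\,ds$. That linearity is what keeps the full exponent $r$, and after the $p$-th root it produces the factor $\frac1p$.

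\textbf{How the paper closes it, and how your route can be repaired.} The paper splits the error into $X^m-X$ and $X^{m,n}-X^m$ and expands each transformed error with Yamada--Watanabe functions. It then casts the estimate in the form of Lemma~\ref{GR_Lemma32} with $q=2$, $\varrho=2\alpha$, obtaining $\mathbb{E}[\sup_t|Z_t|^p]\le C_1R+C_2\int_0^T\mathbb{E}|Z_s|\,ds$. Finally it plugs in Lemma~\ref{jaquez:P6} and Lemma~\ref{DGI:Lemma62}.

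In your setting (It\^o on $|Z|^p$, which is indeed $C^2$ for $p\ge2$) the same linearity is elementary. Since $1\le\eta\le p$, pointwise $|Z|^{\eta}\le|Z|^{p}+|Z|$. For the martingale term, BDG and Young give
\[
\mathbb{E}\Big(\int_0^T|Z_s|^{2p-2+2\alpha}\,ds\Big)^{1/2}\le\tfrac12\,\mathbb{E}\sup_t|Z_t|^p+C\,\mathbb{E}\int_0^T|Z_s|^{p-2+2\alpha}\,ds .
\]
Combining these with Gronwall yields $\mathbb{E}\sup_t|Z_t|^p\lesssim\int_0^T\mathbb{E}|Z_s|\,ds+\text{residuals}$, with no exponent loss and no need for uniform higher-moment bounds.

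With this replacement, your direct comparison of $\phi(t,X_t)$ with $\phi^m(t,X^{m,n}_t)$ is a legitimate, somewhat leaner alternative to the paper's stability/numerical split. Two checks remain:
\begin{enumerate}
\item Transfer the $L^1$ rate of \cite{chaparro_jaquez_convergence_2025}, which is stated for $X$, to $Z$ via $|Z_t|\lesssim|X_t-X^{m,n}_t|+\|u-u^m\|_{\infty,L^\infty}$, using $\|u-u^m\|_{\infty,L^\infty}\lesssim\|b-b^m\|_{C_T\mathcal{C}^{-\hat\beta}}$.
\item Verify that the residuals are dominated by $n^{-r}$. They are $p$-th or $\alpha p$-th powers of quantities already appearing in the $L^1$ bound.
\end{enumerate}
Alternatively, the $\theta$-loss could be recovered from the slack in $\varepsilon$, since $r(\beta,\cdot)$ is strictly decreasing. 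That route requires re-choosing $m(n)$ with a smaller $\varepsilon'$ and a large $q$, and you do not argue it.
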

\begin{remark}
    Note that this convergence rate is not uniform in $p$, as in \cite{gyongy_note_2011,ngo_strong_2016}, but the rate $r(\beta,\varepsilon)$ is exactly the one in \cite{chaparro_jaquez_convergence_2025}.
\end{remark}

Additionally, we prove the following result which extends the bound of \cite{chaparro_jaquez_convergence_2025} to the $L^1$-$\sup$ norm.
\begin{theorem}[$L^1$-$\sup$ Convergence]
        \label{rate:L1}
    Let Assumption \ref{hypothesis:b} hold. Then, there exists a constant $C>0$ such that for any $\varepsilon\in(0,\frac12 - \beta)$ it holds that
    \begin{equation}
        \mathbb{E}\left[\sup_{t\in[0,T]}\bigl|X_t-X_t^{m(n),n}\bigr|\right] \leq C\, n^{-r(\beta,\varepsilon)\left(\frac{1}{2} - \beta -\varepsilon\right)}, 
    \end{equation}
    where $r(\beta,\varepsilon)$ was defined in \eqref{eq:rate}.
\end{theorem}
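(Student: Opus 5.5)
We work with the transformed processes $Y_t=\phi(t,X_t)$ and $Y^{m,n}_t=\phi^m(t,X^{m,n}_t)$ of Section~\ref{virtualsols}. Since $\psi=\phi^{-1}$ is Lipschitz in space uniformly in $t$, and $\psi^m$ is close to $\psi$ uniformly, we have
\[
\sup_t|X_t-X^{m,n}_t|\le C\sup_t|Y_t-Y^{m,n}_t| + C\|\psi-\psi^m\|_\infty .
\]
The second term is a deterministic mollification error, of order $m^{-(\frac12-\beta-\varepsilon)}$ or better in the estimates of \cite{chaparro_jaquez_convergence_2025}. So it suffices to bound $\mathbb{E}\sup_t|Y_t-Y^{m,n}_t|$.

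The error $E_t:=Y_t-Y^{m,n}_t$ solves
\[
E_t=E_0+\int_0^t\bigl(\lambda u(s,X_s)-\lambda u^m(s,X^{m,n}_s)+R^{m,n}_s\bigr)\,ds+\int_0^t\bigl(\sigma(s,Y_s)-\sigma^m(s,\ldots)\bigr)\,dW_s .
\]
Here $R^{m,n}$ collects the discretisation remainder $(b^m(s,X^{m,n}_s)-b^m(t_{k(s)},X^{m,n}_{t_{k(s)}}))\,\partial_x u^m$ and the mollification error terms. The diffusion coefficient is $\sigma=\nabla\phi\circ\psi$ with $\nabla u\in C^{\frac12-\beta-\varepsilon}$ in space.

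We split $E$ into its finite-variation part and its martingale part and take the supremum in time.

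\textbf{Drift part.} It is bounded by $\int_0^T\bigl(C|E_s|+|R_s|+\text{mollification error}\bigr)\,ds$, using the Lipschitz continuity of $u$, $\psi$ and $\psi^m$. Its expectation is at most
\[
C\sup_s\mathbb{E}|E_s|+\mathbb{E}\int_0^T|R_s|\,ds .
\]
Both terms are exactly the quantities controlled in \cite{chaparro_jaquez_convergence_2025}, so with $m=m(n)$ they are $\lesssim n^{-r(\beta,\varepsilon)}$.

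\textbf{Martingale part.} This is the main obstacle. We apply Burkholder--Davis--Gundy with exponent $1$, which gives
\[
\mathbb{E}\sup_t\Bigl|\int_0^t(\ldots)\,dW\Bigr|\le C\,\mathbb{E}\Bigl(\int_0^T|\sigma(s,Y_s)-\sigma^m(s,Y^{m,n}_s)|^2\,ds\Bigr)^{1/2}.
\]
Using Hölder continuity with exponent $\gamma:=\frac12-\beta-\varepsilon$, plus the uniform mollification error $\|\nabla u-\nabla u^m\|_\infty\lesssim m^{-\gamma'}$, the integrand is at most $C|E_s|^{2\gamma}+Cm^{-2\gamma'}$. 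We then apply Jensen and Hölder, using $2\gamma\le1$ and the concavity of $x\mapsto x^{\gamma}$:
\[
\mathbb{E}\Bigl(\int_0^T|E_s|^{2\gamma}ds\Bigr)^{1/2}\le \Bigl(\int_0^T\mathbb{E}|E_s|^{2\gamma}ds\Bigr)^{1/2}\le C\Bigl(\sup_s\mathbb{E}|E_s|\Bigr)^{\gamma}.
\]
By the $L^1$ result of \cite{chaparro_jaquez_convergence_2025}, this is $\lesssim n^{-r\gamma}$. This is the bottleneck, and it explains the exponent $r(\beta,\varepsilon)(\frac12-\beta-\varepsilon)$ in the statement. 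The remaining contributions (drift, mollification of $\psi$, $\sigma$ and $u$, and $E_0$) decay at rate at least $n^{-r}\le n^{-r\gamma}$ once $m(n)$ is chosen as in \cite{chaparro_jaquez_convergence_2025}.

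If the Hölder-in-space control of $\sigma^m$ is not uniform in $m$, we would instead use the uniform bound $\|\nabla u^m\|_{C^{\gamma}}\le C\|b\|$, which follows from the PDE estimates for $u^m$. Combining all the bounds gives $\mathbb{E}\sup_t|X_t-X^{m(n),n}_t|\le Cn^{-r\gamma}$.
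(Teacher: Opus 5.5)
Your argument is correct, and it takes a genuinely different, more economical route than the paper's.

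The paper proceeds in four stages:
\begin{itemize}
\item It splits the error into a stability part $X^m-X$ and a numerical part $X^{m,n}-X^m$.
\item It applies It\^o's formula to the Yamada--Watanabe approximations $\varphi_{\delta,\kappa}(E)$, so it must also handle the $\varphi''_{\delta,\kappa}$ term.
\item It bounds the martingale term via BDG with a H\"older exponent $\alpha>\frac12$, then uses Young's inequality, $\mathbb{E}\bigl(\int_0^T|E_s|^{2\alpha}ds\bigr)^{1/2}\le\frac12\mathbb{E}\sup_t|E_t|+C\bigl(\int_0^T\mathbb{E}|E_s|\,ds\bigr)^{2\alpha-1}$, and absorbs the first term into the left-hand side.
\item This gives separate bounds $\|b^m-b\|_{C_T\mathcal{C}^{-\hat\beta}}^{(2\alpha-1)^2}$ and $(A_mn^{-1}+B_mn^{-1/2})^{2\alpha-1}$, after which $m(n)=n^{\eta}$ is re-balanced.
\end{itemize}

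You instead work directly with $E=Y-Y^{m,n}$. You observe that a supremum bound needs no It\^o formula for $|E|$: the semimartingale decomposition and BDG with exponent $1$ suffice. You then lower the H\"older exponent of $u_x$ to some $\gamma\le\frac12$, so that $x\mapsto x^{2\gamma}$ is concave. Jensen then transfers the pointwise-in-time $L^1$ rate of \cite{chaparro_jaquez_convergence_2025} to the supremum, with no absorption and with their $m(n)$ unchanged.

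\textbf{What your route buys.} It gives more than the statement. Nothing forces $\gamma=\frac12-\beta-\varepsilon$, since Lemma~\ref{jaquez:L14} holds for every exponent below $1-\hat\beta$, in particular for $\frac12$. The same computation therefore gives $n^{-r(\beta,\varepsilon)/2}$, which beats $n^{-r(\beta,\varepsilon)(\frac12-\beta-\varepsilon)}$. This is consistent with what Theorem~\ref{rate:Lp} claims for $p=2$.

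\textbf{What the paper's route buys.} It yields $L^1$-$\sup$ bounds for the stability and numerical errors that are valid for arbitrary $(m,n)$. Its absorption device can also in principle use H\"older exponents above $\frac12$, transferring the $L^1$ rate with power $2\alpha-1$, which can exceed $\frac12$ when $\beta<\frac14$.

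\textbf{Two harmless imprecisions.}
\begin{itemize}
\item The order you quote for $\|\psi-\psi^m\|_\infty$ is not what the cited estimates provide. What you actually need is $\|\psi-\psi^m\|_\infty\lesssim\|b^m-b\|_{C_T\mathcal{C}^{-\hat\beta}}\le\|b^m-b\|_{C_T\mathcal{C}^{-\hat\beta}}^{2\alpha-1}$ (Lemma~\ref{jaquez:L13}). This is $\lesssim n^{-r}$ along $m(n)$, because the last quantity is the stability term balanced in \cite{chaparro_jaquez_convergence_2025}.
\item The term coming from $\psi-\psi^m$ inside $u_x$ in the diffusion difference contributes $\|b^m-b\|^{\gamma}_{C_T\mathcal{C}^{-\hat\beta}}\lesssim n^{-r\gamma}$, not $n^{-r}$. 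This is still within the target rate.
\end{itemize}
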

\begin{remark}
    Throughout the rest of the paper we fix an auxiliary exponent
\[
   \hat{\beta}\in(\beta,\frac{1}{2})
\]
such that $b\in C^{1/2}_T\mathcal{C}^{-\hat{\beta}}$. The rates in Theorems~\ref{rate:Lp} and~\ref{rate:L1} depend only on $\beta$ from Assumption~\ref{hypothesis:b}, while the particular choice of $\hat{\beta}\in(\beta,\tfrac12)$ is only important when choosing optimal values for $m$ and $n$ to maximise convergence.
In the following $C$ will be a positive constant that may change from line to line.
\end{remark}
\noindent\textbf{Organisation of the paper.} Section~\ref{setting} recalls the analytic framework, the definition of virtual solutions, and other auxiliary bounds. Section \ref{sec:lp} contains the proof of Theorem~\ref{rate:Lp}, while Section \ref{sec:localtime} briefly gives an alternative method for proving the same result. Section \ref{sec:l1} gives the proof of Theorem~\ref{rate:L1}.
\section{Framework and Preliminaries}
\label{setting}
In this section we set up the framework in which we will work, including the formal definition of solution to the singular SDE. We also recall estimates on the numerical scheme coming from the literature and the construction of the Yamada-Watanabe approximation of the absolute value.
\subsection{Notation, Spaces, and Other Useful Estimates}\label{sub:notationspacesnorms}
Given a function of only one variable $g:\mathbb{R}\rightarrow\mathbb{R}$, we call its derivative $g'$.
Given a smooth enough function $f:[0,T]\times\mathbb{R}\rightarrow\mathbb{R}$, we will call $f_t$ and $f_x$ its partial derivatives with respect to time and space.
We write the quadratic variation of a continuous, real-valued stochastic process $(Z_t)_{t}$ as $\langle Z\rangle_t$.
Finally, we will contract expressions such as 
$C^\frac12([0,T]; L^\infty(\mathbb{R}))$ and $L^\infty([0,T]; C^1(\mathbb{R}))$
as $C_T^\frac12 L^\infty$ and $L^\infty_T C^1$, respectively.

We now give the precise definition of the H\"older-Zygmund spaces. We denote with $(\cdot)^\wedge$ and $(\cdot)^\vee$ the Fourier transform and its inverse over the space of
Schwartz functions $\mathcal{S}(\mathbb{R})$, extended in the usual way to $\mathcal{S}'(\mathbb{R})$.
For $\gamma\in\mathbb{R}$, $\mathcal{C}^\gamma$ is defined as
\begin{equation}
    \label{SPACE:Besov}
    \mathcal{C}^\gamma(\mathbb{R}) := \left\{ f\in\mathcal{S}'(\mathbb{R}): \|f\|_{\gamma} := \sup_{j\in\mathbb{N}\cup \{-1\}}  2^{j\gamma}\bigl\|(\varphi_j \hat{f})^{\vee}\bigr\|_{L^\infty}<+\infty  \right\}.
\end{equation}
Where $(\varphi_j)_j$ is a generic partition of the unity.
This space corresponds to the non-homogeneous Besov space $B^\gamma_{\infty \infty}(\mathbb{R},\mathbb{R})$. When $\gamma\in\mathbb{R}^+\setminus\mathbb{N}$, this space is exactly the classical H\"older space $C^\gamma$. For more details on the definition of these spaces and their relationships with other spaces see \cite{bahouri_fourier_2011} or \cite{sawano_theory_2018}.\\
For later use, we record equivalent norms in these spaces when $\gamma\in(0,1)$ and $\gamma\in(1,2)$:
\begin{align}
    \gamma\in(0,1)&:\  \|f\|_{L^\infty} + \sup_{0<|x-y|<1}\frac{|f(x) - f(y)|}{|x-y|^\gamma},\label{NORM:CGAMMA01} 
    \\
    \gamma\in(1,2)&:\  \|f\|_{L^\infty} +\|f'\|_{L^\infty} + \sup_{0<|x-y|<1}\frac{|f'(x) - f'(y)|}{|x-y|^{\gamma-1}}.\label{NORM:CGAMMA12} 
\end{align}
We also use the following notation for the supremum norm in time and space, and for a temporal Hölder seminorm for functions $g\in C_T\mathcal{C}^\kappa := C([0,T]; \mathcal{C}^{\kappa})$:
\begin{align*}
    \|g\|_{\infty,L^\infty} &:= \sup_{t\in[0,T]}\sup_{x\in\mathbb{R}}|g(t,x)|,\\
    [g]_{\kappa,L^\infty}   &:= \sup_{t,s\in[0,T],\ t\neq s}\frac{\|g(t) - g(s)\|_{L^\infty}}{|t-s|^\kappa}.
\end{align*}
Given $\gamma\in\mathbb{R}$ we denote as $\mathcal{C}^{\gamma+}$ the inductive spaces given by
\begin{equation*}
    \mathcal{C}^{\gamma+} := \bigcup_{\kappa>\gamma}\mathcal{C}^{\kappa}.
\end{equation*}
As for the non-inductive case, the notation $C_T\mathcal{C}^{\gamma+} := C([0,T]; \mathcal{C}^{\gamma+})$ will also be used. Since these spaces do not have a norm, by $f\in\mathcal{C}^{\gamma+}$ it is meant that there exist some $\kappa>\gamma$ such that $f\in\mathcal{C}^{\gamma}$.

Finally, we record two standard estimates in Hölder-Zygmund spaces that
will be used repeatedly in the sequel.

\begin{lemma}[Bernstein inequality]
    \label{Bernstein}
    For any $\gamma\in\mathbb{R}$ there exists $C>0$ such that for any $f\in\mathcal{C}^{\gamma + 1}$
    \[
        \|f'\|_\gamma \leq C\|f\|_{\gamma+1}, \qquad f\in\mathcal{C}^{\gamma+1}.
    \]
\end{lemma}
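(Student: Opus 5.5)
The plan is to work at the level of Littlewood–Paley blocks, since $\|\cdot\|_\gamma$ in \eqref{SPACE:Besov} is defined blockwise. Write $\Delta_j f := (\varphi_j \hat f)^\vee$. Differentiation is a Fourier multiplier, so it commutes with the multipliers $\varphi_j$: $\Delta_j(f') = (\Delta_j f)'$. The claim therefore reduces to a blockwise estimate
\[
\|(\Delta_j f)'\|_{L^\infty} \le C\, 2^{j}\, \|\Delta_j f\|_{L^\infty}, \qquad j\ge -1,
\]
with $C$ independent of $j$ and $f$. Granting this, multiplying by $2^{j\gamma}$ and taking the supremum over $j$ gives
\[
\|f'\|_\gamma = \sup_j 2^{j\gamma}\|(\Delta_j f)'\|_{L^\infty} \le C \sup_j 2^{j(\gamma+1)}\|\Delta_j f\|_{L^\infty} = C\|f\|_{\gamma+1}.
\]

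To prove the blockwise estimate I use the standard support properties of the dyadic partition. The function $\varphi_{-1}$ is supported in a ball $B(0,R)$, and for $j\ge0$ the function $\varphi_j=\varphi_0(2^{-j}\cdot)$ is supported in the annulus $2^j\mathcal{A}$, which lies inside the ball $2^jB(0,R')$.

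First fix $\chi\in C_c^\infty(\mathbb{R})$ with $\chi\equiv1$ on $B(0,\max(R,R'))$. Then for every $j\ge -1$ we have
\[
\Delta_j f = \bigl(\chi(2^{-j}\cdot)\,\widehat{\Delta_j f}\bigr)^\vee = \Delta_j f * \bigl(2^{j}\check\chi(2^{j}\cdot)\bigr),
\]
where for $j=-1$ the factor $2^{-j}$ is simply a harmless constant. Differentiating the convolution gives
\[
(\Delta_j f)' = \Delta_j f * \bigl(2^{2j}(\check\chi)'(2^{j}\cdot)\bigr).
\]
Young's inequality then yields
\[
\|(\Delta_j f)'\|_{L^\infty} \le 2^{j}\|(\check\chi)'\|_{L^1}\,\|\Delta_j f\|_{L^\infty},
\]
and $\|(\check\chi)'\|_{L^1}<\infty$ because $\check\chi$ is Schwartz. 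This gives the blockwise estimate with $C=\|(\check\chi)'\|_{L^1}$, adjusted by a fixed constant for the $j=-1$ block.

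The only delicate point is rigor for tempered distributions. Each $\Delta_j f$ is a smooth bounded function whenever $f\in\mathcal{C}^{\gamma+1}$, since its Fourier transform is compactly supported (Paley–Wiener). So the convolution identity and differentiation under the integral are legitimate. The identity $\Delta_j f' = (\Delta_j f)'$ follows from $\widehat{f'}=i\xi\hat f$ in $\mathcal{S}'$. I expect no real obstacle here. The mild bookkeeping is the $j=-1$ block, where the bound holds with a constant instead of $2^{j}$; this is absorbed into $C$. The result is also standard, see \cite{bahouri_fourier_2011}, Lemma 2.1, and one could simply cite it.
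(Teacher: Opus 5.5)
Your proof is correct. It is the standard Littlewood--Paley argument via the Bernstein lemma in \cite{bahouri_fourier_2011}, which is all the paper relies on: the paper records this inequality as a standard fact and gives no proof of its own.

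There is one small slip. For $j=-1$ the rescaled cutoff $\chi(2\,\cdot)$ need not equal $1$ on $\supp\varphi_{-1}$, so the identity you wrote can fail for that block. Use $\chi$ itself for $j=-1$, or take $\chi\equiv 1$ on $B(0,2\max(R,R'))$; either fix only changes the constant.
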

Let $(P_t)_{t\ge0}$ denote the heat semigroup on $\mathbb{R}$.
\begin{lemma}[Schauder estimates]
    \label{Schauder}
    Let $f\in\mathcal{C}^\gamma$ for some $\gamma\in\mathbb{R}$, and let $\theta\ge0$.
    Then there exists $C>0$ such that
    \[
        \|P_t f\|_{\gamma + 2\theta} \leq C t^{-\theta} \|f\|_\gamma,
        \qquad t\in(0,T].
    \]
    Moreover, if $\gamma\in\mathbb{R}$ and $\theta\in(0,1)$, then for all
    $f\in\mathcal{C}^{\gamma + 2\theta}$ there exists $C>0$ such that
    \[
        \|P_t f - f\|_{\gamma}\leq C t^{\theta} \|f\|_{\gamma + 2\theta},
        \qquad t\in(0,T].
    \]
\end{lemma}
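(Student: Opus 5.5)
The plan is to prove the two estimates separately, starting from the Littlewood--Paley definition \eqref{SPACE:Besov} with a fixed dyadic partition of unity $(\varphi_j)_j$. The semigroup acts diagonally in frequency: $(\varphi_j\widehat{P_tf})^\vee = P_t(\varphi_j\hat f)^\vee$, with multiplier $e^{-t|\xi|^2}$ (up to normalisation). For $j=-1$ (low frequencies) both bounds are immediate. Indeed, $P_t$ is a contraction on $L^\infty$, and $2^{-(\gamma+2\theta)}$ and $2^{-\gamma}$ are comparable constants. The factor $t^{-\theta}\ge T^{-\theta}$ absorbs the first bound, and $\|(P_t-I)g\|_{L^\infty}\le 2\|g\|_{L^\infty}$ together with $1\le T^\theta t^{-\theta}$ fails as stated. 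So for the second bound at low frequency I will instead use $\|(P_t-I)g\|_{L^\infty}\le t\|\Delta g\|_{L^\infty}\le Ct\|g\|_{L^\infty}$, valid since $g$ has compactly supported spectrum, and then $t\le T^{1-\theta}t^\theta$.

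For $j\ge0$, the block $g_j=(\varphi_j\hat f)^\vee$ has spectrum in an annulus $\{|\xi|\sim 2^j\}$. The key tool is the standard heat-flow Bernstein-type lemma: for such $g$,
\[
\|P_tg\|_{L^\infty}\le Ce^{-ct2^{2j}}\|g\|_{L^\infty}.
\]
I prove it by writing $P_tg=K_{t,j}*g$ with $K_{t,j}=(\tilde\varphi(2^{-j}\cdot)e^{-t|\cdot|^2})^\vee$, where $\tilde\varphi$ is a smooth annulus cutoff equal to $1$ on $\supp\varphi$. Rescaling gives $\|K_{t,j}\|_{L^1}=\|(\tilde\varphi e^{-t2^{2j}|\cdot|^2})^\vee\|_{L^1}$. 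This is bounded by $Ce^{-ct2^{2j}}$: control the $L^1$ norm by $\|(1-\partial_\xi^2)(\cdot)\|_{L^\infty}$-type Sobolev bounds on the multiplier, noting that derivatives of $e^{-\lambda|\xi|^2}$ on the annulus produce only polynomial factors in $\lambda$, which are absorbed into the exponential. Multiplying by $2^{j(\gamma+2\theta)}$ yields
\[
2^{j(\gamma+2\theta)}\|(\varphi_j\widehat{P_tf})^\vee\|_{L^\infty}\le C\,t^{-\theta}\,(t2^{2j})^\theta e^{-ct2^{2j}}\,2^{j\gamma}\|g_j\|_{L^\infty}\le Ct^{-\theta}\|f\|_\gamma,
\]
since $\sup_{x>0}x^\theta e^{-cx}<\infty$. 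Taking the supremum over $j$ gives the first estimate.

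For the second estimate with $j\ge0$, I use the analogous multiplier bound for $(e^{-t|\xi|^2}-1)$ localised to the annulus:
\[
\|(P_t-I)g_j\|_{L^\infty}\le C\min(1,t2^{2j})\|g_j\|_{L^\infty}.
\]
The factor $1$ comes from the contraction bound. The factor $t2^{2j}$ comes from $e^{-\lambda s}-1=-\lambda s\int_0^1e^{-\lambda s u}du$, which reduces to the previous kernel estimate after factoring out $\lambda|\xi|^2$ and using $|\xi|^2\tilde\varphi(2^{-j}\xi)$-multiplier bounds. Since $\theta\in(0,1)$, $\min(1,x)\le x^\theta$. Hence
\[
2^{j\gamma}\|(P_t-I)g_j\|_{L^\infty}\le Ct^\theta 2^{j(\gamma+2\theta)}\|g_j\|_{L^\infty}\le Ct^\theta\|f\|_{\gamma+2\theta}.
\]
Taking the supremum over $j$ concludes the argument.

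The main obstacle is the uniform $L^1$ kernel bound for annulus-localised heat multipliers, with exponential decay in $t2^{2j}$. The rest is bookkeeping. The restriction $\theta<1$ is needed only in the interpolation $\min(1,x)\le x^\theta$. The case $\theta=0$ of the first bound is just the contraction property.
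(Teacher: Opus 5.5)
Your proof is correct and is the standard Littlewood--Paley argument. The paper gives no proof of this lemma: it records it as a standard fact, with the general pointer to \cite{bahouri_fourier_2011}. There it is proved exactly as you do, via the heat-flow Bernstein bound $\|P_t g\|_{L^\infty}\le Ce^{-ct2^{2j}}\|g\|_{L^\infty}$ on dyadic annuli, with the low-frequency block handled separately.

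One inaccuracy is in your closing remark. Since $\min(1,x)\le x^\theta$ holds for every $\theta\in[0,1]$, your argument never actually uses the restriction $\theta<1$.
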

\subsection{Virtual solutions}\label{virtualsols}

We recall the construction of virtual solutions from \cite{issoglio_pde_2024,issoglio_stochastic_2024} restricted in dimension 1 and the resulting well-posedness of \eqref{SDE:original} under the assumption $b\in C_T\mathcal{C}^{-\beta}$ for some $\beta\in(0,\tfrac12)$. 

Fix $\lambda>0$ and consider the backward Kolmogorov-type PDE
\begin{equation*}
  \begin{cases}
    u_t + \dfrac{1}{2}u_{xx} + b\,u_x = \lambda u - b,\\[0.2em]
    u(T,\cdot) = 0.
  \end{cases}
\end{equation*}
A solution is understood in the mild sense, i.e.\ as a function $u:[0,T]\times\mathbb{R}\to\mathbb{R}$ satisfying
\begin{equation}\label{eq:pde-singular}
  u(t)
  = \displaystyle\int_t^T P_{s-t}\bigl(u_x(s)b(s)\bigr)\,ds
    - \int_t^T P_{s-t}\bigl(\lambda u(s)-b(s)\bigr)\,ds,
  \qquad t\in[0,T].
\end{equation}
By \cite[Theorem~4.7]{issoglio_pde_2024} there exists a mild solution $u\in C_T\mathcal{C}^{(2-\beta)-}$, which is unique in $C_T\mathcal{C}^{(1+\beta)+}$. In particular, $u_x\in C_T\mathcal{C}^{(1-\beta)-}$, so $u_x$ is $\alpha$–Hölder continuous in space for every $\alpha<1-\beta$.

Moreover, \cite[Proposition~4.13]{issoglio_pde_2024} shows that, for $\lambda$ large enough, one has
\[
  \|u_x\|_{\infty,L^\infty} < \tfrac12.
\]
For such a choice of $\lambda$, define the Zvonkin-type transformation
\begin{equation*}
  \phi(t,x) := x + u(t,x), \qquad (t,x)\in[0,T]\times\mathbb{R}.
\end{equation*}
Then, for each $t\in[0,T]$, the map $x\mapsto\phi(t,x)$ is a bi-Lipschitz homeomorphism of $\mathbb{R}$, with Lipschitz constants bounded uniformly in $t$. We denote by
\[
  \psi(t,\cdot) := \phi(t,\cdot)^{-1}
\]
its inverse, which is also globally Lipschitz in $x$, uniformly in $t$.

Given $\phi$ and $\psi$, let us consider the SDE for the transformed process $Y$(formally $Y_t = \phi(t, X_t)$):
\begin{equation}\label{eq:Ydyn}
  Y_t
  = \phi(0,X_0)
    + \lambda\int_0^t u\bigl(s,\psi(s,Y_s)\bigr)\,ds
    + \int_0^t \bigl(1 + u_x(s,\psi(s,Y_s))\bigr)\,dW_s,
  \qquad t\in[0,T].
\end{equation}
The coefficients of this SDE are bounded and globally Lipschitz in the space variable, so the SDE admits a unique strong solution $Y$. Following \cite{flandoli_multidimensional_2017}, one then defines
\[
  X_t := \psi(t,Y_t), \qquad t\in[0,T],
\]
and calls $X$ a \emph{virtual solution} of \eqref{SDE:original}. It is shown in \cite[Theorem~4.5]{issoglio_stochastic_2024} that under $b\in C_T\mathcal{C}^{(-\beta)+}$ with $\beta\in(0,\tfrac12)$ there exists a unique virtual solution to \eqref{SDE:original}.

For later use we also introduce the corresponding objects associated with a mollified drift $b^m$. Let $u^m$ be the solution to the Kolmogorov equation with $b^m$ in place of $b$, and define
\[
  \phi^m(t,x) := x + u^m(t,x), \qquad
  \psi^m(t,\cdot) := (\phi^m(t,\cdot))^{-1}.
\]
By the same arguments as above (see again \cite[Proposition~4.13]{issoglio_pde_2024}) the maps $x\mapsto\phi^m(t,x)$ and $x\mapsto\psi^m(t,x)$ are bi-Lipschitz homeomorphisms of $\mathbb{R}$, with Lipschitz constants bounded uniformly in both $t$ and $m$. We denote by $Y^m$ and $X^m_t := \psi^m(t,Y^m_t)$ the transformed and virtual solutions associated to the drift $b^m$, with dynamics given by:
\begin{equation}\label{eq:Ymdyn}
  Y^m_t
  = \phi^m(0,X_0)
    + \lambda\int_0^t u^m\bigl(s,\psi^m(s,Y^m_s)\bigr)\,ds
    + \int_0^t \bigl(1 + u_x^m(s,\psi^m(s,Y^m_s))\bigr)\,dW_s,
  \qquad t\in[0,T].
\end{equation}
In dimension $1$, virtual solutions are strong probabilistic solutions; see \cite[Lemma~2.3]{chaparro_jaquez_convergence_2025}, and if $b$ is a bounded and measurable function then virtual solutions coincide with strong solutions.

\subsection{Auxiliary Estimates}
\subsubsection*{A Gronwall-type lemma for integral error terms}
The following inequality, due to Gyöngy and Rásonyi \cite{gyongy_note_2011},
is a stochastic Gronwall–type lemma that controls the supremum of a
non-negative process in terms of suitable integral functionals of the same
process. It will be applied in the later sections to the
Yamada–Watanabe–type approximations of the error processes.
\begin{lemma}[\text{\cite[Lemma~3.2]{gyongy_note_2011}}]
    \label{GR_Lemma32}
    Let $(Z_t)_{t\geq0}$ be a non-negative stochastic process and set
    $V_t=\sup_{s\leq t}Z_s$. Assume that for some $p>0$, $q\geq 1$,
    $\varrho\in [1,q]$, and constants $C,R\geq 0$ one has
    \[
        \mathbb{E}\left[ V_t^p \right] \leq
        C \mathbb{E}\left[ \left( \int_0^t V_s \, ds \right)^p \right]
        + C \mathbb{E}\left[ \left( \int_0^t Z_s^\varrho \, ds \right)^{p/q} \right]
        + R < +\infty,
    \]
    for all $t\geq 0$. Then, for each $T\geq 0$, if either $p\geq q$ or both
    $\varrho< q$ and $p>q+1-\varrho$ hold, there exist constants $C_1,C_2>0$ depending
    on $C$, $T$, $\varrho$, and $p$ such that
    \[
        \mathbb{E}\left[ V_T^p \right] \leq
        C_1 R + C_2 \int_0^T \mathbb{E}\left[ Z_s \right] ds.
    \]
\end{lemma}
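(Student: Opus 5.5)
The plan is to reduce the hypothesis to a closed inequality for $\varphi(t):=\mathbb{E}[V_t^p]$ by absorbing the $Z^\varrho$-term into a small multiple of $V_t^p$, and then to close with the classical (deterministic) Gronwall lemma. The hypothesis that the right-hand side is finite is what allows this absorption, since it makes $\varphi(t)<+\infty$ for all $t$.

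\emph{Step 1 (the $V$-integral).} Since $V$ is non-decreasing, Jensen's inequality gives
\[
\Bigl(\int_0^t V_s\,ds\Bigr)^p\le t^{p-1}\int_0^t V_s^p\,ds .
\]
Hence the first term is bounded by $C T^{p-1}\int_0^t\varphi(s)\,ds$ when $p\ge1$. If $p<1$ I would instead use $\int_0^t V_s\,ds\le tV_t$ and a Young-type splitting, as in Step~2.

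\emph{Step 2 (the $Z^\varrho$-integral).} Using $Z_s\le V_s\le V_t$, I would write
\[
\int_0^t Z_s^\varrho\,ds\le V_t^{\varrho-1}\int_0^t Z_s\,ds ,
\qquad\text{so}\qquad
\Bigl(\int_0^t Z_s^\varrho\,ds\Bigr)^{p/q}\le V_t^{p(\varrho-1)/q}\Bigl(\int_0^t Z_s\,ds\Bigr)^{p/q}.
\]
When $\varrho<q$, Young's inequality with conjugate exponents $q/(\varrho-1)$ and $q/(q-\varrho+1)$ gives
\[
\delta\,V_t^p+C_\delta\Bigl(\int_0^t Z_s\,ds\Bigr)^{p/(q-\varrho+1)} .
\]
When $p\ge q$, I would first use $(\int_0^t Z_s\,ds)^{p/q}\le (tV_t)^{p/q-1}\int_0^t Z_s\,ds$ to lower the power of the integral, and then apply Young. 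The conditions ``$p\ge q$'' or ``$\varrho<q$ and $p>q+1-\varrho$'' are exactly those that make the exponent bookkeeping consistent, and I would choose the splitting separately in each case. Taking expectations and choosing $\delta=1/(2C)$, the term $C\delta\,\varphi(t)$ is absorbed into the left-hand side, which is legitimate because $\varphi(t)<\infty$. This yields
\[
\varphi(t)\le C'\int_0^t\varphi(s)\,ds+C'\,\mathbb{E}[\Xi_t]+2R ,
\]
where $\Xi_t$ is the resulting functional of $\int_0^t Z_s\,ds$.

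\emph{Step 3 (Gronwall and the main obstacle).} The deterministic Gronwall lemma then gives $\varphi(T)\le C_1R+C_2\,\mathbb{E}[\Xi_T]$. The hardest step, where I expect the real work to lie, is turning $\mathbb{E}[\Xi_T]$ into the \emph{linear} quantity $\int_0^T\mathbb{E}[Z_s]\,ds$. The naive Young splitting leaves a power $(\int_0^T Z_s\,ds)^{\kappa}$ with $\kappa\ge1$, which Jensen's inequality does not reduce to a linear term. I am therefore not sure Steps~1--3 alone suffice for the linear bound.

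To attack this, I would first try to keep one factor of $\int_0^t Z_s\,ds$ and put all the remaining powers on $V_t$. That means choosing the Young exponents so that the leftover is $V_t^{\,p-\epsilon}\int_0^t Z_s\,ds$. I would then use a second Young or H\"older step in which the $V$-part is absorbed again, via its finite $p$-th moment, after a localisation by stopping times $\tau_N=\inf\{t:V_t\ge N\}$. The precise exponent constraints in the lemma are what should make this second absorption possible. I would check this against the argument of \cite{gyongy_note_2011} before relying on it.
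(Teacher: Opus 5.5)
\textbf{There is a genuine gap, at exactly the point you flagged.} The paper does not prove this lemma; it quotes it from \cite{gyongy_note_2011}, so your argument has to stand on its own. Nothing in Steps 1--3 produces a term \emph{linear} in $I_t:=\int_0^t Z_s\,ds$, and the remedy you sketch cannot produce one either.

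Take any $a\in(0,p)$ and any Young or H\"older splitting of $V_t^{a}I_t$ that returns $V_t^p$ on one side, so that it can be absorbed. On the other side it necessarily returns $I_t^{p/(p-a)}$, whose exponent is strictly larger than $1$. Localising with $\tau_N$ does not change any exponent. So pushing all the spare powers onto $V_t$ goes in the wrong direction. A telltale sign is that your bookkeeping never uses the hypothesis $p>q+1-\varrho$: your Step~2 works for every $p$ and simply leaves $I_t^{p/(q+1-\varrho)}$.

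\textbf{The missing idea.} Keep only a \emph{fractional} power $\theta<1$ of $I_t$, and tune Young's exponents so that $I_t^\theta$ becomes exactly $I_t$. From $Z_s^\varrho\le V_t^{\varrho-1}Z_s$ and $I_t\le tV_t$ you get, for every $\lambda\in[0,1]$,
\[
\Bigl(\int_0^t Z_s^\varrho\,ds\Bigr)^{p/q}\le t^{(1-\lambda)p/q}\,V_t^{(\varrho-\lambda)p/q}\,I_t^{\theta},\qquad \theta:=\frac{\lambda p}{q}.
\]
Apply Young with exponent $\frac{1}{1-\theta}$ on the $V$-factor and $\frac{1}{\theta}$ on the rest. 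This yields $\epsilon V_t^p+C_\epsilon t^{(1-\lambda)/\lambda}I_t$, provided $\frac{(\varrho-\lambda)p}{q-\lambda p}=p$, i.e.\ $\lambda=\frac{q-\varrho}{p-1}$.

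The constraints on $\lambda$ are exactly the hypotheses of the lemma:
\begin{itemize}
\item $\lambda>0$ requires $\varrho<q$.
\item $\lambda\le1$ is precisely $p\ge q+1-\varrho$.
\item $\theta<1$ means $p\varrho>q$, which follows from $p\varrho>(q+1-\varrho)\varrho=q+(\varrho-1)(q-\varrho)$.
\end{itemize}
Then absorb $\epsilon\,\mathbb{E}V_t^p$ into the left-hand side (legitimate by the finiteness assumption), apply your Step~1, and close with Gronwall.

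\textbf{The case $p\ge q$.} This case includes $\varrho=q$, where $\lambda=0$ and the splitting above breaks down. Use Jensen instead:
\[
\Bigl(\int_0^t Z_s^\varrho\, ds\Bigr)^{p/q}\le t^{p/q-1}\int_0^t Z_s^{\varrho p/q}\,ds .
\]
Combine this with $x^{\varrho p/q}\le x+x^p$, valid since $1\le \varrho p/q\le p$, and with $Z_s\le V_s$. The term becomes $t^{p/q-1}\int_0^t(\mathbb{E}V_s^p+\mathbb{E}Z_s)\,ds$, which goes straight into Gronwall with no absorption needed.

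Finally, $p\ge1$ under either hypothesis of the lemma, so your $p<1$ branch is vacuous.
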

As in the H\"older continuous case of \cite{gyongy_note_2011} and \cite{ngo_strong_2016} this inequality will be used to bound the $L^p$ error of our scheme with the $1/p$-root of the $L^1$ error estimates already available.
\subsubsection*{Error bounds for the mollified Euler scheme}
While bounding the numerical error in the proofs of the main theorems we will need the dynamics of the Zvonkin transform of the Euler scheme for $X^m$. By \cite[Lemma~6.1]{de_angelis_numerical_2022} we have,
\begin{align}
    \label{eq:Ymn}
     Y^{m,n}_t
  = \phi^m(0,X_0)
    + \lambda\int_0^t u^m\bigl(s,\psi^m(s,Y^{m,n}_s)\bigr)\,ds
    + \int_0^t \bigl(1 + u_x^m(s,\psi^m(s,Y^{m,n}_s))\bigr)\,dW_s\\
    + \int_0^t \bigl(b^{m}\bigl(t_{k(s)},X_{t_{k(s)}}^{m,n}\bigr)-b^{m}\bigl(s,X_s^{m,n}\bigr)\bigr)\bigl(1+u^{m}_x(s,X_s^{m,n})\bigr)ds
  ,\qquad t\in[0,T].
\end{align}
Next, we recall the $L^1$–error estimate for the Euler–Maruyama approximation of the mollified SDE \eqref{SDE:mollified}. This estimate will be used in both in the
$L^1$ and $L^p$ convergence analyses when we separate the “numerical” error
$X^{m,n}-X^m$ from the “stability” error $X^m-X$.

\begin{lemma}[\text{\cite[Lemma~6.2]{de_angelis_numerical_2022}}]
    \label{DGI:Lemma62}
    Assume $b^m\in C^\frac{1}{2}_T L^\infty\cap L^\infty_T C^1_b$. Then 
    \begin{equation}
        \label{numerica_L1}
        \sup_{0\leq t \leq T}\mathbb{E}|X^{m,n}_t - X^m_t|
        \leq C (A_m\, n^{-1} + B_m\, n^{-\frac{1}{2}}),
    \end{equation}
    where
    \begin{align*}
        A_m &= \|b^m\|_{\infty, L^\infty}
              \bigl(1+\|b^m_x\|_{\infty,L^\infty}\bigr), \\
        B_m &= \|b_x^m\|_{\infty,L^\infty}
              + [b^m]_{\frac{1}{2},L^\infty}.
    \end{align*}
\end{lemma}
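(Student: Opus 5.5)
The plan is to pass to the Zvonkin-transformed processes and estimate $Y^m-Y^{m,n}$ in $L^1$ with a Yamada--Watanabe argument. The point is that every constant must be uniform in $m$, with $m$ entering only through $A_m$ and $B_m$.

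\textbf{Transformation and error equation.} Since $b^m\in C^{\frac12}_TL^\infty\cap L^\infty_TC^1_b$, the solution $u^m$ of the Kolmogorov equation is regular enough for It\^o's formula applied to $u^m(t,X^{m,n}_t)$. This gives the dynamics \eqref{eq:Ymn} for $Y^{m,n}_t:=\phi^m(t,X^{m,n}_t)$, while $Y^m$ solves \eqref{eq:Ymdyn}. Both processes start from $\phi^m(0,X_0)$. Set $\Delta_t:=Y^m_t-Y^{m,n}_t$. Then $\Delta$ is the sum of three contributions:
\begin{itemize}
\item a drift difference $\lambda\bigl(u^m(s,\psi^m(s,Y^m_s))-u^m(s,\psi^m(s,Y^{m,n}_s))\bigr)$, which is Lipschitz in $\Delta_s$;
\item a stochastic integral with integrand $\sigma^m(s,Y^m_s)-\sigma^m(s,Y^{m,n}_s)$, where $\sigma^m:=u^m_x\circ\psi^m$;
\item the remainder $R_t:=-\int_0^t \bigl(b^m(t_{k(s)},X^{m,n}_{t_{k(s)}})-b^m(s,X^{m,n}_s)\bigr)(1+u^m_x(s,X^{m,n}_s))\,ds$.
\end{itemize}
The Lipschitz constants in the first item must be uniform in $m$. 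So must the bi-Lipschitz constants of $\psi^m$ and the $\alpha$-H\"older norm of $u^m_x$ for some $\alpha\in(\frac12,1-\beta)$. These follow from the uniform bounds of \cite{issoglio_pde_2024}, since $\|b^m\|_{C_T\mathcal{C}^{-\beta}}$ is bounded in $m$. Consequently $\sigma^m$ is $\alpha$-H\"older in space, uniformly in $t$ and $m$, and $\|u^m_x\|_{\infty,L^\infty}<\frac12$.

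\textbf{Yamada--Watanabe step.} Apply It\^o's formula to $\varphi_{\delta,a}(\Delta_t)$, where $\varphi_{\delta,a}$ is the Yamada--Watanabe approximation of $|\cdot|$. Its second derivative is supported in $[\delta/a,\delta]$ and bounded by $2/(|x|\ln a)$. Take expectations; the martingale part vanishes because the integrands are bounded. Using $|\varphi'|\le1$ and $|x|-\delta\le\varphi_{\delta,a}(x)$ gives
\[
\mathbb{E}|\Delta_t|\le \delta + C\int_0^t\mathbb{E}|\Delta_s|\,ds + \frac{C}{\ln a}\,t\,\delta^{2\alpha-1} + \frac32\,\mathbb{E}\int_0^t\bigl|b^m(t_{k(s)},X^{m,n}_{t_{k(s)}})-b^m(s,X^{m,n}_s)\bigr|\,ds .
\]
Since $2\alpha-1>0$, letting $\delta\to0$ with $a$ fixed (say $a=2$) removes both $\delta$-terms. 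This is the step I expect to be the main obstacle. It works only because the H\"older exponent of the diffusion coefficient exceeds $\frac12$ uniformly in $m$. One could instead use $\|u^m_{xx}\|_\infty$, but that would produce $m$-dependent constants and spoil the rate balancing later.

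\textbf{Remainder and conclusion.} Split the increment of $b^m$ into a time part and a space part:
\[
|b^m(t_{k(s)},x)-b^m(s,x)|\le [b^m]_{\frac12,L^\infty}h^{\frac12},\qquad |b^m(s,x)-b^m(s,y)|\le \|b^m_x\|_{\infty,L^\infty}|x-y| .
\]
From \eqref{SDE:mollifieddiscretised} we have $\mathbb{E}|X^{m,n}_s-X^{m,n}_{t_{k(s)}}|\le \|b^m\|_{\infty,L^\infty}h + C h^{\frac12}$. Hence the remainder term is at most $C\bigl(A_m h + B_m h^{\frac12}\bigr)$ with $h=T/n$. Gronwall's lemma, applied to the continuous function $t\mapsto\mathbb{E}|\Delta_t|$, then yields $\sup_t\mathbb{E}|\Delta_t|\le C(A_m n^{-1}+B_m n^{-\frac12})$. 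Finally, $X^m_t-X^{m,n}_t=\psi^m(t,Y^m_t)-\psi^m(t,Y^{m,n}_t)$, and $\psi^m$ is Lipschitz uniformly in $t$ and $m$. This transfers the bound to \eqref{numerica_L1}.
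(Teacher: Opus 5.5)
Your proof is correct, but it takes a different route from the source the paper relies on. The paper does not reprove this lemma; it imports it from \cite[Lemma~6.2]{de_angelis_numerical_2022}. As recalled in Section~\ref{sec:localtime}, that argument applies the It\^o--Tanaka formula to $|Y^m-Y^{m,n}|$ and controls the local time at $0$ through Lemma~\ref{lem:localtimel1}. You replace this with the Yamada--Watanabe smoothing of $|\cdot|$ and the classical $C^2$ It\^o formula.

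In substance the two are the same. With $d\langle \hat Y\rangle_s\le C|\hat Y_s|^{2\alpha}ds$, the last term in Lemma~\ref{lem:localtimel1} is $O(\varepsilon^{2\alpha-1})$, exactly like your $\tfrac12\varphi''$ term. In both cases it disappears in the limit, leaving a Gronwall inequality driven by the discretisation remainder. Your version avoids local times altogether. Structurally it is the paper's own proof of Lemma~\ref{l1:numerical} with the supremum outside the expectation, so the martingale term simply has zero mean and no BDG/Young step is needed. The local-time version instead packages the near-zero analysis into a general semimartingale inequality that can be reused for the stability estimate.

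One correction to your commentary: the $\delta\to0$ step is not where uniformity in $m$ matters. The second-order contribution is bounded by $C\,t\,\delta^{2\alpha-1}/\ln a$ and then sent to zero for fixed $m$, so the size of $C$ is irrelevant. Even an $m$-dependent Lipschitz bound for $\sigma^m$ (via $u^m_{xx}$) would only give a term $C_m t\delta/\ln a$, which also vanishes, so it would not spoil the later rate balancing. What must be uniform in $m$ are:
\begin{itemize}
\item the Gronwall constant $\lambda\,\mathrm{Lip}(u^m\circ\psi^m)\le\lambda$;
\item the bound $|1+u^m_x|\le\tfrac32$;
\item the Lipschitz bound $\mathrm{Lip}(\psi^m)\le 2$.
\end{itemize}
All of these follow from $\|u^m_x\|_{\infty,L^\infty}<\tfrac12$ with $\lambda$ chosen independently of $m$. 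The uniform $\alpha$-H\"older bound with $\alpha>\tfrac12$ becomes genuinely necessary only in the paper's supremum-inside estimates, where BDG produces $\int_0^T|\Delta_s|^{2\alpha}ds$ together with its constant.
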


\subsubsection*{Stability estimates for the Zvonkin transformation}

Now, we collect the stability estimates for the Zvonkin transformation and its
inverse, as well as the corresponding stability bound for the virtual
solutions. These results allow us to control the error between $X$ and $X^m$
via the transformed processes $Y$ and $Y^m$ in the proofs of the main theorems.

\begin{lemma}[\text{\cite[Lemma~4.3]{chaparro_jaquez_convergence_2025}}]
    \label{jaquez:L13}
    Let $\hat\beta\in(\beta,\frac{1}{2})$, then
    \begin{equation*}
        \sup_{(t,y)\in[0,T]\times\mathbb{R}}
        |\psi(t,y) - \psi^m(t,y)|
        \leq 2C\|b-b^m\|_{C_T\mathcal{C}^{-\hat\beta}}.
    \end{equation*}
    Where the constant C depends on $T$ and $\|b\|_{C_T\mathcal{C}^{-\hat\beta}}$.
\end{lemma}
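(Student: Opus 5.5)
The statement to prove is Lemma~\ref{jaquez:L13}: a uniform bound on $\psi-\psi^m$ in terms of $\|b-b^m\|_{C_T\mathcal{C}^{-\hat\beta}}$. The plan has two steps. First, reduce the comparison of the inverse maps to a comparison of $u$ and $u^m$ in sup norm, using the bi-Lipschitz structure. Second, bound $u-u^m$ through the mild formulation \eqref{eq:pde-singular} and a fixed-point/Gronwall argument.

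\emph{Step 1: reduction to $u-u^m$.} Fix $t$ and $y$, and set $x=\psi(t,y)$ and $x^m=\psi^m(t,y)$. By definition,
\[
x+u(t,x)=y=x^m+u^m(t,x^m).
\]
Subtracting gives
\[
x-x^m=u^m(t,x^m)-u^m(t,x)+u^m(t,x)-u(t,x).
\]
Since $\|u^m_x\|_{\infty,L^\infty}<\tfrac12$ uniformly in $m$ (Proposition~4.13 of \cite{issoglio_pde_2024}, with $\lambda$ chosen uniformly), the first difference satisfies
\[
|u^m(t,x^m)-u^m(t,x)|\le\tfrac12|x-x^m|.
\]
Absorbing this term yields
\[
|x-x^m|\le 2\|u(t)-u^m(t)\|_{L^\infty}.
\]
This is the source of the factor $2$ in the statement. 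It therefore suffices to show
\[
\sup_t\|u(t)-u^m(t)\|_{L^\infty}\le C\|b-b^m\|_{C_T\mathcal{C}^{-\hat\beta}}.
\]

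\emph{Step 2: stability of the PDE.} Set $w=u-u^m$. Subtracting the mild equations gives
\[
w(t)=\int_t^T P_{s-t}\bigl(w_x(s)b(s)+u^m_x(s)(b-b^m)(s)\bigr)\,ds-\int_t^T P_{s-t}\bigl(\lambda w(s)-(b-b^m)(s)\bigr)\,ds.
\]
I would estimate $w$ in $\mathcal{C}^{1+\alpha}$ with $\alpha\in(\hat\beta,1-\hat\beta)$, which controls both the $L^\infty$ norm and $w_x$. The key ingredients are as follows.
\begin{itemize}
\item The paraproduct estimate $\|fg\|_{-\hat\beta}\le C\|f\|_{\alpha}\|g\|_{-\hat\beta}$, valid since $\alpha>\hat\beta$.
\item Lemma~\ref{Bernstein}, giving $\|w_x\|_\alpha\le C\|w\|_{1+\alpha}$.
\item Lemma~\ref{Schauder} with $2\theta=1+\alpha+\hat\beta<2$, giving
\[
\|w(t)\|_{1+\alpha}\le C\int_t^T (s-t)^{-\theta}\Bigl(\|w(s)\|_{1+\alpha}\|b\|_{-\hat\beta}+\|u^m_x(s)\|_{\alpha}\|b-b^m\|_{-\hat\beta}+\|b-b^m\|_{-\hat\beta}\Bigr)ds+\lambda C\int_t^T\|w(s)\|_{1+\alpha}\,ds.
\end{itemize}
Here $\theta<1$, so the singular kernel is integrable. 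The factor $\sup_m\|u^m_x\|_{C_T\mathcal{C}^\alpha}$ is bounded uniformly in $m$, because $\|b^m\|_{C_T\mathcal{C}^{-\hat\beta}}\le C\|b\|_{C_T\mathcal{C}^{-\hat\beta}}$ for standard mollifiers. A fractional (Volterra-type) Gronwall inequality then yields
\[
\sup_t\|w(t)\|_{1+\alpha}\le C\|b-b^m\|_{C_T\mathcal{C}^{-\hat\beta}},
\]
with $C$ depending on $T$, $\lambda$ and $\|b\|_{C_T\mathcal{C}^{-\hat\beta}}$. Since the $L^\infty$ norm is dominated by the $\mathcal{C}^{1+\alpha}$ norm, this concludes the proof.

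\emph{Main obstacle.} The delicate point is the uniformity in $m$ of the constants: both the bound $\|u^m_x\|<\tfrac12$ with a fixed $\lambda$, and the uniform $\mathcal{C}^{1+\alpha}$ bound on $u^m$. These require that the a priori estimates of \cite{issoglio_pde_2024} depend on $b^m$ only through $\|b^m\|_{C_T\mathcal{C}^{-\hat\beta}}$. An alternative to the fractional Gronwall inequality is a weighted-in-time norm, $\sup_t e^{-\rho(T-t)}\|w(t)\|_{1+\alpha}$ with $\rho$ large, which makes the solution map a contraction directly.
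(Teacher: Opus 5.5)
Your argument is correct and is essentially the standard proof of this result; the paper itself gives no proof and imports the lemma from \cite{chaparro_jaquez_convergence_2025}. There, as in your proposal, the factor $2$ comes from absorbing the Lipschitz term using $\|u_x\|_{\infty,L^\infty}<\tfrac12$, and the remaining bound $\sup_t\|u(t)-u^m(t)\|_{1+\alpha}\le C\|b-b^m\|_{C_T\mathcal{C}^{-\hat\beta}}$ is the PDE stability estimate obtained from the mild formulation via Schauder, paraproduct and a Gronwall or weighted-norm argument, exactly as you describe.

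The uniformity issue you flag is harmless. Split $u_xb-u^m_xb^m=u_x(b-b^m)+(u_x-u^m_x)b^m$, so that the only uniform input needed is $\sup_m\|b^m\|_{C_T\mathcal{C}^{-\hat\beta}}\le C\|b\|_{C_T\mathcal{C}^{-\hat\beta}}$, which holds for $b^m=P_{1/m}b$. In Step~1 you can also expand around $u$ instead of $u^m$, so that only $\|u_x\|_{\infty,L^\infty}<\tfrac12$ is used.
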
 

\begin{lemma}[\text{\cite[Lemma~4.4]{chaparro_jaquez_convergence_2025}}]
    \label{jaquez:L14}
    Let $\hat\beta\in(\beta,\frac{1}{2})$. Then, for any $\alpha<1-\hat\beta$, and any $y,y'\in\mathbb{R}$, the following bounds hold:
    \begin{equation}\label{jaquez:L141}
        |u^m(s,\psi^m(s,y'))-u(s,\psi(s,y))|
        \leq 2C\|b^m-b\|_{C_T\mathcal{C}^{-\hat\beta}} + |y-y'|,
    \end{equation}
    \begin{align}\label{jaquez:L142}
        &|u^m_x(s,\psi^m(s,y'))-u_x(s,\psi(s,y))|
        \leq C\|b^m-b\|_{C_T\mathcal{C}^{-\hat\beta}} \nonumber\\
        &\qquad+ 2^\alpha C^\alpha\|u\|_{C_T\mathcal{C}^{1+\alpha}}
                 \|b^m-b\|_{C_T\mathcal{C}^{-\hat\beta}}^\alpha
        + 2||u_x||_{C_T\mathcal{C}^{\alpha}}|y'-y|^\alpha.
    \end{align}
    Where $C$ is the same one as Lemma~\ref{jaquez:L13}.
\end{lemma}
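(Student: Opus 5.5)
The plan is to prove both bounds by inserting intermediate points and using the triangle inequality. The estimates needed are: the PDE stability of $u-u^m$, the Lipschitz property of $\psi$, and Lemma~\ref{jaquez:L13} for $\psi-\psi^m$.

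\textbf{PDE stability.} I take as given, since it is the estimate underlying Lemma~\ref{jaquez:L13}, the bound
\[
\|u-u^m\|_{C_T\mathcal{C}^{1+\alpha}}\le C\|b-b^m\|_{C_T\mathcal{C}^{-\hat\beta}},\qquad \alpha<1-\hat\beta,
\]
with the same constant $C$ depending on $T$ and $\|b\|_{C_T\mathcal{C}^{-\hat\beta}}$. If it has to be re-derived, one subtracts the mild formulations \eqref{eq:pde-singular} for $u$ and $u^m$ and writes
\[
u_x b-u^m_x b^m=(u_x-u^m_x)b+u^m_x(b-b^m).
\]
Both products are well defined because $1+\alpha-\hat\beta>0$. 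One then applies Lemma~\ref{Schauder} with $2\theta=1+\alpha+\hat\beta<2$, so that the time singularity $(s-t)^{-\theta}$ is integrable, together with Lemma~\ref{Bernstein}. Closing with a Gronwall argument, or with the exponentially weighted norm in $\lambda$, uses the uniform-in-$m$ bounds on $u^m$. I expect this step to be the main obstacle, but here it is inherited from the cited works.

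\textbf{Lipschitz bound on $\psi$.} Since $\phi(t,\psi(t,y))=y$, we have $u(t,\psi(t,y))=y-\psi(t,y)$. Because $\|u_x\|_{\infty,L^\infty}<\tfrac12$, we get $\phi_x\ge\tfrac12$, and hence $|\psi(t,y)-\psi(t,y')|\le 2|y-y'|$.

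\textbf{First bound \eqref{jaquez:L141}.} Write
\begin{align*}
u^m(s,\psi^m(s,y'))-u(s,\psi(s,y)) ={}& \bigl[u^m-u\bigr](s,\psi^m(s,y'))
+\bigl[u(s,\psi^m(s,y'))-u(s,\psi(s,y'))\bigr]\\
&+\bigl[u(s,\psi(s,y'))-u(s,\psi(s,y))\bigr].
\end{align*}
The first bracket is at most $\|u-u^m\|_{\infty,L^\infty}\le C\|b-b^m\|_{C_T\mathcal{C}^{-\hat\beta}}$ by the stability bound. The second is at most $\|u_x\|_{\infty,L^\infty}\,|\psi^m-\psi|\le\tfrac12\cdot 2C\|b-b^m\|_{C_T\mathcal{C}^{-\hat\beta}}$ by Lemma~\ref{jaquez:L13}. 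The third is at most $\tfrac12\cdot 2|y-y'|=|y-y'|$ by the Lipschitz bound on $\psi$. Summing gives \eqref{jaquez:L141}.

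\textbf{Second bound \eqref{jaquez:L142}.} Use the same three-term splitting with $u$ replaced by $u_x$.
\begin{itemize}
\item The first term is bounded by $\|u_x-u^m_x\|_{\infty,L^\infty}\le C\|b-b^m\|_{C_T\mathcal{C}^{-\hat\beta}}$, again by the stability bound.
\item The second term uses the $\alpha$-Hölder seminorm of $u_x$, which is controlled by $\|u\|_{C_T\mathcal{C}^{1+\alpha}}$ via \eqref{NORM:CGAMMA12}. Combined with Lemma~\ref{jaquez:L13}, this gives $\|u\|_{C_T\mathcal{C}^{1+\alpha}}(2C\|b-b^m\|)^\alpha$.
\item The third term is bounded by $\|u_x\|_{C_T\mathcal{C}^\alpha}|\psi(s,y')-\psi(s,y)|^\alpha\le 2^\alpha\|u_x\|_{C_T\mathcal{C}^\alpha}|y-y'|^\alpha\le 2\|u_x\|_{C_T\mathcal{C}^\alpha}|y-y'|^\alpha$.
\end{itemize}

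In the second and third terms the Hölder quotient in \eqref{NORM:CGAMMA01} only covers increments below $1$. For increments $\ge1$ I would bound the difference by $2\|u_x\|_{\infty,L^\infty}$ and use $|\cdot|^\alpha\ge 1$, so the same right-hand sides still dominate.
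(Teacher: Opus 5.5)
Your proposal is correct and follows the argument behind the cited lemma. Splitting through the intermediate points $\psi^m(s,y')$ and $\psi(s,y')$, and using $\|u-u^m\|_{C_T\mathcal{C}^{1+\alpha}}\le C\|b-b^m\|_{C_T\mathcal{C}^{-\hat\beta}}$, $\|u_x\|_{\infty,L^\infty}<\tfrac12$, the $2$-Lipschitz bound on $\psi$ and Lemma~\ref{jaquez:L13}, produces exactly the constants $2C$, $2^\alpha C^\alpha$ and $2$ of the statement.

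There are two minor slips, neither load-bearing. In your optional stability sketch, the product $u_x b$ pairs $\mathcal{C}^{\alpha}$ with $\mathcal{C}^{-\hat\beta}$, so it needs $\alpha>\hat\beta$ rather than $1+\alpha-\hat\beta>0$; this is where $\hat\beta<\tfrac12$ is used, and smaller $\alpha$ then follow by monotonicity of the norms. In your large-increment remark, a $\psi$-increment $\ge1$ only forces $|y-y'|\ge\tfrac12$. So bounding by $2\|u_x\|_{\infty,L^\infty}$ costs an extra factor $2^\alpha$, unless you split increments in $[1,2)$ at the midpoint.
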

\begin{lemma}[\text{\cite[Proposition 3.3]{chaparro_jaquez_convergence_2025}}]
    \label{jaquez:P6}
    Under Assumption \ref{hypothesis:b}, let $\hat{\beta}\in(\beta,\frac{1}{2})$ be such that $b\in C^{1/2}_T\mathcal{C}^{-\hat{\beta}}$. Then for any $\alpha\in(\frac{1}{2},1-\hat{\beta})$ there exists a constant $C>0$ such that
    \begin{equation*}
        \sup_{t\in[0,T]} \mathbb{E}|X^m_t - X_t|
        \leq C \|b^m - b\|_{C_T\mathcal{C}^{-\hat{\beta}}}^{2\alpha - 1},
    \end{equation*}
    for all $m\in\mathbb{N}^+$ such that
    $\|b^m - b\|_{C_T\mathcal{C}^{-\hat{\beta}}}<1$.
\end{lemma}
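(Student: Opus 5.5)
The statement immediately above is Lemma~\ref{jaquez:P6}, which is quoted from \cite[Proposition 3.3]{chaparro_jaquez_convergence_2025}. The plan is to reproduce its proof through the Zvonkin transformation, controlling $Y^m-Y$ in $L^1$ with a Yamada--Watanabe argument. First reduce to the transformed processes. Write $\delta_m:=\|b^m-b\|_{C_T\mathcal{C}^{-\hat\beta}}<1$. Since $X_t=\psi(t,Y_t)$ and $X^m_t=\psi^m(t,Y^m_t)$, we have
\[
|X^m_t-X_t|\le |\psi^m(t,Y^m_t)-\psi(t,Y^m_t)|+|\psi(t,Y^m_t)-\psi(t,Y_t)|\le C\delta_m + C|Y^m_t-Y_t|,
\]
using Lemma~\ref{jaquez:L13} and the uniform Lipschitz property of $\psi$. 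Since $2\alpha-1<1$ and $\delta_m<1$, it therefore suffices to show $\sup_t\mathbb{E}|Y^m_t-Y_t|\le C\delta_m^{2\alpha-1}$.

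Set $Z_t:=Y^m_t-Y_t$. Subtracting \eqref{eq:Ydyn} from \eqref{eq:Ymdyn}, the initial difference is $|u^m(0,X_0)-u(0,X_0)|\le C\delta_m$ by the PDE stability underlying Lemma~\ref{jaquez:L13}. The drift difference is bounded by $\lambda(C\delta_m+|Z_s|)$ via \eqref{jaquez:L141}. By \eqref{jaquez:L142}, the diffusion difference $\sigma^m_s-\sigma_s$ satisfies
\[
|\sigma^m_s-\sigma_s|\le C\delta_m^\alpha + C|Z_s|^\alpha .
\]

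Next apply the Yamada--Watanabe functions $\varphi_{\eta\epsilon}$ of Section~\ref{subsec:YW-technique}. These satisfy $|x|\le \varphi_{\eta\epsilon}(x)+\epsilon$, $|\varphi'|\le1$ and $0\le\varphi''(x)\le \frac{2}{|x|\log\eta}\mathbbm{1}_{[\epsilon/\eta,\epsilon]}(|x|)$. Itô's formula applied to $\varphi_{\eta\epsilon}(Z_t)$, after taking expectations (the stochastic integral is a true martingale since the coefficients are bounded), gives
\[
\mathbb{E}|Z_t|\le \epsilon + C\delta_m + C\int_0^t\mathbb{E}|Z_s|\,ds + \frac{C}{\log\eta}\,\mathbb{E}\int_0^t \frac{\delta_m^{2\alpha}+|Z_s|^{2\alpha}}{|Z_s|}\mathbbm{1}_{[\epsilon/\eta,\epsilon]}(|Z_s|)\,ds .
\]
The last term is at most $\frac{C}{\log\eta}\bigl(\delta_m^{2\alpha}\eta\epsilon^{-1}+\epsilon^{2\alpha-1}\bigr)$. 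Here $2\alpha>1$ is exactly what makes $|Z|^{2\alpha-1}$ small, and it is why the lemma requires $\alpha>\frac12$.

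Finally, choose the parameters. Take $\eta=2$ and $\epsilon=\delta_m$. Then $\delta_m^{2\alpha}\eta/\epsilon\simeq\delta_m^{2\alpha-1}$ and $\epsilon^{2\alpha-1}=\delta_m^{2\alpha-1}$, so the right-hand side is bounded by
\[
C\delta_m^{2\alpha-1}+C\int_0^t\mathbb{E}|Z_s|\,ds .
\]
The classical Gronwall inequality then gives $\sup_t\mathbb{E}|Z_t|\le C\delta_m^{2\alpha-1}$, and the first step concludes the proof.

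The main obstacle is the quadratic-variation term. The diffusion error carries the non-vanishing additive piece $\delta_m^\alpha$, which is not small relative to $|Z|$ near zero. The plan is therefore to balance $\epsilon$ against $\delta_m$ rather than send $\epsilon\to0$; optimising $\eta$ brings no improvement beyond this power. A second point to check is that the constants in \eqref{jaquez:L142} are uniform in $m$, which follows from the uniform $\mathcal{C}^{1+\alpha}$ bounds on $u$ for $\alpha<1-\hat\beta$.
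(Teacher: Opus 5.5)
Your proof is correct. The paper does not prove this lemma itself; it quotes it from \cite[Proposition~3.3]{chaparro_jaquez_convergence_2025}. According to Section~\ref{sec:localtime}, the $L^1$ estimates there come from applying the It\^o--Tanaka formula to $|Y^m_t-Y_t|$ and bounding the local time $L^0_t(Y^m-Y)$ through Lemma~\ref{lem:localtimel1}.

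With $d\langle Y^m-Y\rangle_s\le C(\delta_m^{2\alpha}+|Y^m_s-Y_s|^{2\alpha})\,ds$, the last term of \eqref{localtimeformula} produces the same trade-off you obtain from $\varphi''_{\eta\epsilon}$, namely $C(\delta_m^{2\alpha}/\varepsilon+\varepsilon^{2\alpha-1})$. It is resolved by the same choice $\varepsilon=\delta_m$, followed by Gronwall.

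Your route replaces the It\^o--Tanaka identity and the separate local-time estimate with a single application of the classical It\^o formula to the $C^2$ function $\varphi_{\eta\epsilon}$. The price is an additive $\epsilon$, which costs nothing because you set $\epsilon=\delta_m$ anyway. The argument is self-contained and uses exactly the machinery of Lemmas~\ref{Lp:stability} and~\ref{l1:stability}.

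In fact, your argument is the proof of Lemma~\ref{l1:stability} with the supremum moved outside the expectation. The stochastic integral then has zero mean, so no BDG or Young step is needed. That is why you get the exponent $2\alpha-1$ rather than $(2\alpha-1)^2$, and why you can close with the classical Gronwall inequality instead of Lemma~\ref{GR_Lemma32}.

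The only formal caveat is that $\epsilon=\delta_m$ must be strictly positive. The case $\delta_m=0$ is trivial, since then $b^m=b$, hence $u^m=u$ and $Y^m\equiv Y$.
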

\subsubsection*{The Approximation Technique of Yamada and Watanabe}\label{subsec:YW-technique}
One of the first steps in the proofs of Theorems \ref{rate:L1} and \ref{rate:Lp} is to approximate the function $x\mapsto |x|$
in the way of Yamada and Watanabe \cite{watanabe_uniqueness_1971}.
 Let $\delta>1$ and $\kappa\in(0,1)$, and define first a function $\psi_{\delta,\kappa}:\mathbb{R}\rightarrow[0,\infty)$ 
such that:
\begin{enumerate}
    \item $ \supp\psi_{\delta,\kappa} \subset [\kappa/\delta, \kappa],$
    \item $\displaystyle \int_{\kappa/\delta}^{\kappa}\psi_{\delta,\kappa}(z)\,dz =1,$
    \item $0\leq \psi_{\delta,\kappa}(z) \leq \dfrac{2}{z\log\delta},\ z>0.$
\end{enumerate}
Such a function $\psi_{\delta,\kappa}$ exists: for example one may normalise 
the function $z \mapsto 1/(z\log\delta)$ on $[\kappa/\delta,\kappa]$.
We now define another function 
$\varphi_{\delta,\kappa}\in C^2(\mathbb{R};\mathbb{R})$ as
\[
  \varphi_{\delta,\kappa}(x):=\int_{0}^{|x|}\int_0^y\psi_{\delta,\kappa}(z)\,dz\,dy.
\]
$\varphi_{\delta,\kappa}$ has the following properties:
\begin{align}
    & |x|\leq \kappa + \varphi_{\delta,\kappa}(x)\text{, for any } x\in\mathbb{R}, \label{propphi:A}\\
    & 0\leq |\varphi'_{\delta,\kappa}(x)|\leq 1\text{, for any } x\in\mathbb{R},  \label{propphi:B}\\
    & \varphi''_{\delta,\kappa}(\pm |x|) = \psi_{\delta,\kappa}(|x|)\leq \frac{2}{|x|\log\delta}\mathbbm{1}_{[\kappa/\delta,\kappa]}(|x|)\text{, for any } x\in\mathbb{R}\setminus\{0\}.  \label{propphi:C}
\end{align}
Property \eqref{propphi:A} will allow us to bound the absolute value in the following, and since $\varphi_{\delta,\kappa}\in C^2$, we will be able to apply the classical It\^o formula.
\section{Proof of Theorem~\ref{rate:Lp}: $L^p$-sup Convergence}
In this section the main convergence rate in $L^p$ is proved using the Yamada-Watanabe approximation technique. In the last subsection, we suggest an alternative proof of the same result.
\subsection{A Yamada-Watanabe Approximation Based Proof}
\label{sec:lp}
We begin with the standard decomposition of the strong error into a
``stability'' part and a ``numerical'' part. For any $p\ge2$ we have
\begin{align}\label{eq:decomposition}
    \mathbb{E}\left[ \sup_{0\leq t\leq T} \left\lvert X_t^{m,n}-X_t \right\rvert^p \right]
    &\leq \mathbb{E}\left[ \sup_{0\leq t\leq T}\left(\left\lvert X_t^{m,n}-X_t^{m}\right\rvert+\left\lvert X_t^{m}-X_t\right\rvert\right)^p\right]\nonumber \\
    &\leq 2^{p-1}\mathbb{E}\left[ \sup_{0\leq t\leq T}\left\lvert X_t^{m,n}-X_t^{m}\right\rvert^p\right]
    +2^{p-1}\mathbb{E}\left[ \sup_{0\leq t\leq T}\left\lvert X_t^{m}-X_t\right\rvert^p\right].
\end{align}

Our goal is to obtain bounds for the two terms in the second inequality in the form required by Lemma~\ref{GR_Lemma32}.

\begin{lemma}[Moment Stability Estimate]
    \label{Lp:stability}
    Let Assumption \ref{hypothesis:b} hold and fix $\hat{\beta}\in(\beta,\tfrac12)$ as above. Let $\alpha\in(\tfrac12,1-\hat{\beta})$ and $p\ge2$. Then, for all $m\in\mathbb{N}^+$ such that
    $\|b^m - b\|_{C_T\mathcal{C}^{-\hat{\beta}}}<1$ there exists a constant $C>0$ such that
    \begin{equation*}
        \mathbb{E}\left[ \sup_{0\leq t\leq T}\left\lvert X_t^{m}-X_t\right\rvert^p\right] \leq C\|b^m-b\|_{C_T\mathcal{C}^{-\hat{\beta}}}^{2\alpha-1}.
    \end{equation*}
\end{lemma}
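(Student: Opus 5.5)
We pass to the transformed processes and then close the estimate with the Gyöngy–Rásonyi lemma, Lemma~\ref{GR_Lemma32}.

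\textbf{Reduction to $Y$.} Since $X_t=\psi(t,Y_t)$ and $X^m_t=\psi^m(t,Y^m_t)$, we write
\[
|X^m_t-X_t|\le |\psi^m(t,Y^m_t)-\psi(t,Y^m_t)|+|\psi(t,Y^m_t)-\psi(t,Y_t)|.
\]
By Lemma~\ref{jaquez:L13} and the uniform Lipschitz property of $\psi$, this gives
\[
|X^m_t-X_t|\le C\|b^m-b\|_{C_T\mathcal{C}^{-\hat\beta}}+C|Y^m_t-Y_t|.
\]
Set $\Delta:=\|b^m-b\|_{C_T\mathcal{C}^{-\hat\beta}}<1$. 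Then $\Delta^p\le\Delta^{2\alpha-1}$ because $p\ge2>2\alpha-1$. It therefore suffices to prove $\mathbb{E}[\sup_t|Y^m_t-Y_t|^p]\le C\Delta^{2\alpha-1}$.

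\textbf{Itô with Yamada–Watanabe.} Let $Z_t=Y^m_t-Y_t$. Subtracting \eqref{eq:Ydyn} from \eqref{eq:Ymdyn} gives
\[
dZ=\lambda\bigl(u^m(s,\psi^m(s,Y^m_s))-u(s,\psi(s,Y_s))\bigr)ds+\sigma_s\,dW_s .
\]
The initial value satisfies $Z_0=u^m(0,X_0)-u(0,X_0)$, and $|Z_0|\le C\Delta$ by the stability of the PDE (this is implicit in Lemma~\ref{jaquez:L14}). By \eqref{jaquez:L142}, the diffusion difference satisfies $|\sigma_s|\le C\Delta^\alpha+C|Z_s|^\alpha$. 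Apply Itô's formula to $\varphi_{\delta,\kappa}(Z_t)$ and use \eqref{propphi:A}–\eqref{propphi:C} together with \eqref{jaquez:L141}. The drift term is bounded by $C\int_0^t(\Delta+|Z_s|)ds$. The second-order term is bounded by
\[
\int_0^t\frac{C(\Delta^{2\alpha}+|Z_s|^{2\alpha})}{|Z_s|\log\delta}\mathbbm{1}_{[\kappa/\delta,\kappa]}(|Z_s|)ds\le \frac{C T}{\log\delta}\Bigl(\frac{\Delta^{2\alpha}\delta}{\kappa}+\kappa^{2\alpha-1}\Bigr).
\]
This yields
\[
V_t:=\sup_{s\le t}|Z_s|\le \kappa+C\Delta+C\int_0^tV_s\,ds+\frac{C}{\log\delta}\Bigl(\frac{\Delta^{2\alpha}\delta}{\kappa}+\kappa^{2\alpha-1}\Bigr)+\sup_{s\le t}|M_s|,
\]
with $M$ the stochastic integral. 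Raise this to the $p$-th power and use BDG:
\[
\mathbb{E}\sup|M|^p\le C\,\mathbb{E}\Bigl(\int_0^t\sigma_s^2ds\Bigr)^{p/2}\le C\Delta^{\alpha p}+C\,\mathbb{E}\Bigl(\int_0^t|Z_s|^{2\alpha}ds\Bigr)^{p/2}.
\]

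\textbf{Applying Lemma~\ref{GR_Lemma32}.} The resulting inequality has exactly the required form with $q=2$, $\varrho=2\alpha\in(1,2)$ and $p\ge q$. Choose $\delta=2$ and $\kappa=\Delta$, so that the remainder satisfies $R\le C(\Delta^{p}+\Delta^{\alpha p}+\Delta^{(2\alpha-1)p})\le C\Delta^{(2\alpha-1)}$. Finiteness of the left side holds since $Y,Y^m$ have bounded coefficients. The lemma then gives
\[
\mathbb{E}[V_T^p]\le C_1R+C_2\int_0^T\mathbb{E}|Z_s|ds.
\]

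\textbf{Main obstacle.} The remaining step is to bound $\sup_s\mathbb{E}|Y^m_s-Y_s|$ by $C\Delta^{2\alpha-1}$. This is the $L^1$ estimate underlying Lemma~\ref{jaquez:P6}. I would obtain it by rerunning the same Yamada–Watanabe computation with $p=1$ and without taking the supremum, so that the martingale term has zero expectation. Gronwall then gives
\[
\mathbb{E}|Z_t|\le C\Bigl(\kappa+\Delta+\frac{\Delta^{2\alpha}\delta}{\kappa\log\delta}+\frac{\kappa^{2\alpha-1}}{\log\delta}\Bigr),
\]
and $\kappa=\Delta$, $\delta=2$ give $C\Delta^{2\alpha-1}$. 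If instead one only quotes Lemma~\ref{jaquez:P6} for $X$, transferring it back to $Y$ through $\phi^m$ costs an additional term of order $C\Delta$, which is harmless. Combining the pieces proves the lemma.
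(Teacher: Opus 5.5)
Your proposal is correct and follows the paper's proof essentially step by step. The shared route is: reduce to $Y^m-Y$ via Lemma~\ref{jaquez:L13} and the Lipschitz property of $\psi$, apply It\^o's formula to $\varphi_{\delta,\kappa}(Y^m-Y)$ with BDG, invoke Lemma~\ref{GR_Lemma32} with $q=2$, $\varrho=2\alpha$ and $\kappa=\|b^m-b\|_{C_T\mathcal{C}^{-\hat\beta}}$, and close with an $L^1$ bound on $\mathbb{E}|Y^m_s-Y_s|$. You are in fact slightly more careful than the paper on three points: you account for the initial difference $Y^m_0-Y_0=u^m(0,X_0)-u(0,X_0)$, you keep the factor $\delta/\kappa$ in the second-order term, and you justify passing from the $X$-level estimate of Lemma~\ref{jaquez:P6} to the $Y$-level one.
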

\begin{proof}
Let $Y$ and $Y^m$ denote the transformed processes associated with $X$ and
$X^m$, respectively, as in Section~\ref{virtualsols}. By the bi-Lipschitz
property of $\psi$ and $\psi^m$ and Lemma~\ref{jaquez:L13} we have
\begin{align*}
    \left\lvert X_{t}^{m}-X_t\right\rvert^p
    &=\left\lvert \psi^{m}\bigl(t,Y^{m}_t\bigr)-\psi\bigl(t,Y_t\bigr)\right\rvert^p\\
    &\leq 2^{p-1}\left\lvert \psi^{m}\bigl(t,Y^{m}_t\bigr)-\psi^{m}\bigl(t,Y_t\bigr)\right\rvert^p
         +2^{p-1}\left\lvert \psi^{m}\bigl(t,Y_t\bigr)-\psi\bigl(t,Y_t\bigr)\right\rvert^p\\
    &\leq 2^{p-1}\left\lvert Y^{m}_t-Y_t\right\rvert^p
         +2^{p-1}C^p \left\lVert b^{m}-b\right\rVert_{C_T\mathcal{C}^{-\hat{\beta}}}^p.
\end{align*}
Taking expectation of the supremum, we have
\begin{equation}
\label{eq:stability-reduction-Y}
\mathbb{E}\left[ \sup_{0\leq t\leq T}\left\lvert X_t^{m}-X_t\right\rvert^p\right]
\leq2^{p-1}C^p \left\lVert b^{m}-b\right\rVert_{C_T\mathcal{C}^{-\hat{\beta}}}^p +
 2^{p-1}\mathbb{E}\left[ \sup_{0\leq t\leq T}\left\lvert Y^{m}_t-Y_t\right\rvert^p\right].
\end{equation}
It is thus enough to estimate the second term.

Using \eqref{propphi:A} and It\^o Formula on $\varphi_{\delta,\kappa}$, we have
\begin{align}
    &\mathbb{E}\left[ \sup_{0\leq t\leq T}\left\lvert Y^{m}_t-Y_t\right\rvert^p\right]
    \leq \mathbb{E}\left[ \sup_{0\leq t\leq T}\bigl(\kappa+\varphi_{\delta,\kappa}(Y^{m}_t-Y_t)\bigr)^p\right]\nonumber\leq4^{p-1}\kappa^p\\
    &\quad
    +4^{p-1}\mathbb{E}\left[ \sup_{0\leq t\leq T}\left(\int_{0}^{t}\varphi'_{\delta,\kappa}\bigl(Y^{m}_s-Y_s\bigr)\bigl(u^{m}(s,\psi^{m}(s,Y^{m}_s))-u\bigl(s,\psi(s,Y_s)\bigr)\bigr)\,ds\right)^p\right]\label{Lp:stability:line1}\\
    &\quad+4^{p-1}\mathbb{E}\left[ \sup_{0\leq t\leq T}\left(\int_{0}^{t}\varphi'_{\delta,\kappa}\bigl(Y^{m}_s-Y_s\bigr)\bigl(u^{m}_x(s,\psi^{m}(s,Y^{m}_s))-u_x(s,\psi(s,Y_s))\bigr)\,dW_s\right)^p\right]\label{Lp:stability:line2}\\
    &\quad+4^{p-1}\mathbb{E}\left[ \sup_{0\leq t\leq T}\left(\int_{0}^{t}\varphi''_{\delta,\kappa}\bigl(Y^{m}_s-Y_s\bigr)\bigl(u^{m}_x(s,\psi^{m}(s,Y^{m}_s))-u_x(s,\psi(s,Y_s))\bigr)^2\,ds\right)^p\right].\label{Lp:stability:line3}
\end{align}
By Lemma~\ref{jaquez:L14} and $|\varphi'_{\delta,\kappa}|\le1$,
\begin{align*}
    \eqref{Lp:stability:line1}
    &\leq4^{p-1}\mathbb{E}\left[\left(\int_{0}^{T}\bigl(2c\left\lVert b^{m}-b\right\rVert_{C_T\mathcal{C}^{-\hat{\beta}}}+\left\lvert Y^{m}_s-Y_s\right\rvert\bigr)\,ds\right)^p\right]\\
    &\leq C\left\lVert b^{m}-b\right\rVert_{C_T\mathcal{C}^{-\hat{\beta}}}^p
    +C\mathbb{E}\left[\left(\int_{0}^{T}\sup_{0\leq r\leq T}\left\lvert Y^{m}_r-Y_r\right\rvert\,ds\right)^p\right].
\end{align*}
For \eqref{Lp:stability:line3}, using \eqref{propphi:C} and Lemma~\ref{jaquez:L14},
\begin{align*}
    \eqref{Lp:stability:line3}
    &\leq C\mathbb{E}\left[\sup_{0\leq t\leq T}\left(\int_{0}^{t}\frac{2\mathbbm{1}_{\left[\kappa/\delta,\kappa\right]}(|Y^{m}_s-Y_s|)}{|Y^{m}_s-Y_s|\log\delta}\right.\right.\times\\
    &\quad\left.\left.\times\bigl(\|u_x\|_{C_T\mathcal{C}^\alpha}|Y^{m}_s-Y_s|^{\alpha}
    +C\|b^{m}-b\|_{C_T\mathcal{C}^{-\hat{\beta}}}
    +C\|b^{m}-b\|_{C_T\mathcal{C}^{-\hat{\beta}}}^\alpha\bigr)^2\,ds\right)^p\right]\\
    &\leq C\left(\frac{2}{\log\delta}\kappa^{2\alpha-1}+
    \frac{2}{\kappa\log\delta}\|b^{m}-b\|_{C_T\mathcal{C}^{-\hat{\beta}}}^2
    +\frac{2}{\kappa\log\delta}\|b^{m}-b\|_{C_T\mathcal{C}^{-\hat{\beta}}}^{2\alpha}
    \right)^p.
\end{align*}
For the martingale term \eqref{Lp:stability:line2}, the Burkholder-Davis-Gundy (BDG) inequality combined with Lemma~\ref{jaquez:L14} yields
\begin{align*}
    \eqref{Lp:stability:line2}
    &\leq C\mathbb{E}\left[\left(\int_{0}^{T}\bigl(u^{m}_x(s,\psi^{m}(s,Y^{m}_s))-u_x(s,\psi(s,Y_s))\bigr)^2\,ds\right)^{\frac{p}{2}}\right]\\
    &\leq C\left(\|b^{m}-b\|_{C_T\mathcal{C}^{-\hat{\beta}}}^p
    +\|b^{m}-b\|_{C_T\mathcal{C}^{-\hat{\beta}}}^{\alpha p}\right)
    +C\mathbb{E}\left[\left(\int_{0}^{T}\left\lvert Y^{m}_s-Y_s\right\rvert^{2\alpha}\,ds\right)^{\frac{p}{2}}\right].
\end{align*}
Collecting the bounds on \eqref{Lp:stability:line1},\eqref{Lp:stability:line2}, and \eqref{Lp:stability:line3} and choosing
$\kappa := \|b^m-b\|_{C_T\mathcal{C}^{-\hat{\beta}}}$, we see that
$\mathbb{E}\left[ \sup_{0\leq t\leq T}\left\lvert Y^{m}_t-Y_t\right\rvert^p\right]$ satisfies an inequality of the form required by
Lemma~\ref{GR_Lemma32}, with
\[
  Z_s = |Y^{m}_t-Y_t|,\qquad
  q=2,\qquad
  \varrho=2\alpha,\qquad
  p\ge2,
\]
the other terms are all powers greater than $2\alpha-1$ of $\|b^m-b\|_{C_T\mathcal{C}^{-\hat{\beta}}}$, in the notation of
Lemma~\ref{GR_Lemma32} we collect them into $R$. Since we can take $\alpha>\tfrac12$, the condition
$\varrho<q$ and $p>q+1-\varrho$ is satisfied.

Applying Lemma~\ref{GR_Lemma32} yields
\[
  \mathbb{E}[V_T^p]
  \leq
  C_1 R
  + C_2\int_0^T\mathbb{E}[Z_s]\,ds
  \leq
  C\|b^m-b\|_{C_T\mathcal{C}^{-\hat{\beta}}}^{2\alpha-1},
\]
where in the last step we use the $L^1$ stability estimate
Lemma~\ref{jaquez:P6} to bound $\mathbb{E}[Z_s]$ in terms of
$\|b^m-b\|_{C_T\mathcal{C}^{-\hat{\beta}}}^{2\alpha-1}$, and the fact that
$\|b^m-b\|_{C_T\mathcal{C}^{-\hat{\beta}}}<1$ to absorb the higher order terms. Together with \eqref{eq:stability-reduction-Y} this completes
the proof.
\end{proof}

Now we move on to the estimate of the numerical approximation of the solution to the mollified SDE.
\begin{lemma}[Moment Numerical Estimate]\label{lp:numerical}
    Let Assumption \ref{hypothesis:b} hold and $p\ge2$. Then it holds that
   \begin{equation*}
    \mathbb{E}\left[\sup_{0\leq t\leq T}\left\lvert X_t^{m,n}-X_t^{m}\right\rvert^p\right]
    \leq  C(A_m\, n^{-1} + B_m\, n^{-\frac{1}{2}}+C_m n^{-p} + D_m n^{-\frac{p}{2}}),
\end{equation*}
where $A_m$ and $B_m$ are the same ones as in Lemma~\ref{DGI:Lemma62} and $C_m$ and $D_m$ are defined as:
\begin{align*}
    C_m &:= \| b^{m} \|_{\infty,L^\infty}^p(\| b^{m}_x \|_{\infty,L^\infty}^p+1),\\
    D_m &:= \| b^{m}_x \|_{\infty,L^\infty}^p+[ b^{m} ]_{\frac{1}{2},L^\infty}^p.
\end{align*}
\end{lemma}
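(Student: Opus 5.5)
We mirror the proof of Lemma~\ref{Lp:stability}, now comparing $Y^{m,n}_t:=\phi^m(t,X^{m,n}_t)$ with $Y^m_t$. Since $\psi^m(t,\cdot)$ is Lipschitz uniformly in $t$ and $m$, we have $|X^{m,n}_t-X^m_t|\le C|Y^{m,n}_t-Y^m_t|$. It therefore suffices to bound $\mathbb{E}[\sup_{t\le T}|Y^{m,n}_t-Y^m_t|^p]$ by a right-hand side of the type required in Lemma~\ref{GR_Lemma32}.

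\textbf{Itô expansion.} Subtracting \eqref{eq:Ymdyn} from \eqref{eq:Ymn}, both processes have the same coefficients, evaluated at $Y^{m,n}$ and $Y^m$ respectively. The difference also contains the extra drift
\[
E_s:=\bigl(b^m(t_{k(s)},X^{m,n}_{t_{k(s)}})-b^m(s,X^{m,n}_s)\bigr)\bigl(1+u^m_x(s,X^{m,n}_s)\bigr).
\]
Using \eqref{propphi:A} and Itô's formula for $\varphi_{\delta,\kappa}(Y^{m,n}-Y^m)$, we bound $\mathbb{E}[\sup|Y^{m,n}-Y^m|^p]$ by $5^{p-1}$ times the sum of five terms: $\kappa^p$, a Lipschitz drift term, a martingale term, a $\varphi''$ term, and the $E$ term. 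They are handled as follows.
\begin{itemize}
\item The drift term: $y\mapsto u^m(s,\psi^m(s,y))$ is Lipschitz uniformly in $m$ (by \eqref{jaquez:L141} with $b^m=b$), so this term is bounded by $C\mathbb{E}[(\int_0^t V_s\,ds)^p]$.
\item The martingale term: by BDG and the uniform-in-$m$ $\alpha$-Hölder bound on $u^m_x\circ\psi^m$ (from \eqref{jaquez:L142} with $b^m=b$, assuming $\|u^m_x\|_{C_T\mathcal{C}^\alpha}$ is bounded uniformly in $m$), it is bounded by $C\mathbb{E}[(\int_0^t Z_s^{2\alpha}ds)^{p/2}]$ with $\alpha\in(\tfrac12,1-\hat\beta)$.
\item The $\varphi''$ term: by \eqref{propphi:C} it is bounded by $C(\kappa^{2\alpha-1}/\log\delta)^p$. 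We let $\kappa\to0$ (or $\delta\to\infty$), so that this term and $\kappa^p$ vanish.
\end{itemize}
This leaves only the $E$ term, $\mathbb{E}[(\int_0^T|E_s|\,ds)^p]$, to go into $R$.

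\textbf{The discretisation term.} Since $|1+u^m_x|\le\tfrac32$, we split
\[
|E_s|\le C\bigl([b^m]_{\frac12,L^\infty}h^{1/2}+\|b^m_x\|_{\infty,L^\infty}|X^{m,n}_s-X^{m,n}_{t_{k(s)}}|\bigr).
\]
We also have $|X^{m,n}_s-X^{m,n}_{t_{k(s)}}|\le\|b^m\|_{\infty,L^\infty}h+|W_s-W_{t_{k(s)}}|$. By Jensen and $\mathbb{E}|W_s-W_{t_{k(s)}}|^p\le Ch^{p/2}$, this gives
\[
\mathbb{E}\Bigl[\Bigl(\int_0^T|E_s|ds\Bigr)^p\Bigr]\le C\bigl(\|b^m\|^p_{\infty}\|b^m_x\|_\infty^p h^p+(\|b^m_x\|_\infty^p+[b^m]^p_{\frac12})h^{p/2}\bigr)\le C(C_m n^{-p}+D_m n^{-p/2}).
\]
The ``$+1$'' in $C_m$ gives harmless slack.

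\textbf{Conclusion and main obstacle.} Lemma~\ref{GR_Lemma32} applies with $q=2$, $\varrho=2\alpha\in(1,2)$ and $p\ge2>3-2\alpha$. It yields
\[
\mathbb{E}[V_T^p]\le C_1R+C_2\int_0^T\mathbb{E}|Y^{m,n}_s-Y^m_s|\,ds.
\]
The integral is bounded, via the bi-Lipschitz property of $\phi^m$ and Lemma~\ref{DGI:Lemma62}, by $C(A_mn^{-1}+B_mn^{-1/2})$, which completes the estimate.

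The delicate point is ensuring that every constant ($C$, $C_1$, $C_2$, the Lipschitz and Hölder constants of $u^m\circ\psi^m$ and $u^m_x\circ\psi^m$) is independent of $m$ and $n$. This rests on the uniform-in-$m$ bounds for $u^m$ in $C_T\mathcal{C}^{1+\alpha}$ from \cite{issoglio_pde_2024}, which follow from $\sup_m\|b^m\|_{C_T\mathcal{C}^{-\hat\beta}}<\infty$. A second point is that Lemma~\ref{GR_Lemma32} requires the right-hand side to be finite. This holds because $b^m$ is bounded, so all the moments involved are finite.
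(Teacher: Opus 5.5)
Your proposal is correct and follows essentially the same route as the paper. You reduce to $Y^{m,n}-Y^m$ via the Lipschitz inverse $\psi^m$, apply It\^o's formula to $\varphi_{\delta,\kappa}$, split the discretisation term into the same time and space parts, and then apply Lemma~\ref{GR_Lemma32} with $q=2$, $\varrho=2\alpha$, bounding $\int_0^T\mathbb{E}|Y^{m,n}_s-Y^m_s|\,ds$ through Lemma~\ref{DGI:Lemma62}. You also make explicit two points the paper leaves implicit: that the constants (hence $C_1,C_2$) are uniform in $m$, and that the right-hand side in Lemma~\ref{GR_Lemma32} is finite.
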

\begin{proof}
    The argument parallels that of the stability estimate. Using the Zvonkin transformation for $X^{m,n}$ and $X^m$, we first reduce to bounding $Y^{m,n}-Y^{m}$.
\begin{align}
    &\mathbb{E}\left[\sup_{0\leq t\leq T}\left\lvert X_t^{m,n}-X_t^{m}\right\rvert^p\right]
    \leq C\mathbb{E}\left[\sup_{0\leq t\leq T}\left\lvert Y^{m,n}_t-Y^{m}_t\right\rvert^p\right]\nonumber\\
    &\leq\mathbb{E}\left[\sup_{0\leq t\leq T}\left(\kappa+\varphi_{\delta,\kappa}\bigl(Y^{m,n}_t-Y^{m}_t\bigr)\right)^p\right] \leq C\kappa^p\nonumber\\
   & +C\mathbb{E}\left[\sup_{0\leq t\leq T}\left(\int_{0}^{t}\varphi'_{\delta,\kappa}\bigl(Y^{m,n}_s-Y^{m}_s\bigr)\bigl(u^{m}(s,\psi^{m}(s,Y^{m,n}_s))-u^{m}(s,\psi^{m}(s,Y^{m}_s))\bigr)\,ds\right)^p\right]\label{Lp:numerical:line1}\\
    &+C\mathbb{E}\left[\sup_{0\leq t\leq T}\left(\int_{0}^{t}\varphi'_{\delta,\kappa}\bigl(Y^{m,n}_s-Y^{m}_s\bigr)\bigl(b^{m}\bigl(t_{k(s)},X_{t_{k(s)}}^{m,n}\bigr)-b^{m}\bigl(s,X_s^{m,n}\bigr)\bigr)\bigl(1+u^{m}_x(s,X_s^{m,n})\bigr)\,ds\right)^p\right]\label{Lp:numerical:line2}\\
    &+C\mathbb{E}\left[\sup_{0\leq t\leq T}\left(\int_{0}^{t}\varphi'_{\delta,\kappa}\bigl(Y^{m,n}_s-Y^{m}_s\bigr)\bigl(u^{m}_x(s,\psi^{m}(s,Y^{m,n}_s))-u^{m}_x(s,\psi^{m}(s,Y^{m}_s))\bigr)\,dW_s\right)^p\right]\label{Lp:numerical:line3}\\
    &+C\mathbb{E}\left[\sup_{0\leq t\leq T}\left(\int_{0}^{t}\varphi''_{\delta,\kappa}\bigl(Y^{m,n}_s-Y^{m}_s\bigr)\bigl(u^{m}_x(s,\psi^{m}(s,Y^{m,n}_s))-u^{m}_x(s,\psi^{m}(s,Y^{m}_s))\bigr)^2\,ds\right)^p\right].\label{Lp:numerical:line4}
\end{align}

We first focus on
\eqref{Lp:numerical:line2}, which encodes the time discretisation error. Splitting \eqref{Lp:numerical:line2} into a ``time'' and a ``space'' term,
\begin{align}
    \eqref{Lp:numerical:line2}
    &\leq C\mathbb{E}\left[\left(\int_{0}^{T}\left\lvert b^{m}\bigl(t_{k(s)},X_{t_{k(s)}}^{m,n}\bigr)-b^{m}\bigl(s,X_{t_{k(s)}}^{m,n}\bigr)\right\rvert\,ds\right)^p\right]\label{Lp:numerical:line2:time}\\
    &\quad+C\mathbb{E}\left[\left(\int_{0}^{T}\left\lvert b^{m}\bigl(s,X_{t_{k(s)}}^{m,n}\bigr)-b^{m}\bigl(s,X_s^{m,n}\bigr)\right\rvert\,ds\right)^p\right].\label{Lp:numerical:line2:space}
\end{align}
For the time part \eqref{Lp:numerical:line2:time}, the temporal $1/2$-Hölder regularity of $b^m$ yields
\[
    \eqref{Lp:numerical:line2:time}
    \leq cT^{\frac{3}{2}p}[b^m]_{\frac{1}{2},L^\infty}^p n^{-\frac{p}{2}}.
\]
For the space part \eqref{Lp:numerical:line2:space}, recall that by the definition of the Euler scheme\eqref{SDE:mollifieddiscretised}, we have
\[
\mathbb{E}\left[\left\lvert X_t^{m,n}-X_{t_{k(t)}}^{m,n}\right\rvert^p\right]
    \leq 2^{p-1}\|b^{m}\|_{\infty,L^\infty}^p\frac{T^p}{n^p}
    +2^{p-1}\frac{T^{\frac{p}{2}}}{n^{\frac{p}{2}}}\,\mathbb{E}\bigl[|W_1|^p\bigr],
\]
which then implies the bound for \eqref{Lp:numerical:line2:space}, since
\begin{align*}
    \eqref{Lp:numerical:line2:space}\leq T^p\|b_x^m\|_{\infty,L^\infty}^p\mathbb{E}\left[\left\lvert X_t^{m,n}-X_{t_{k(t)}}^{m,n}\right\rvert^p\right]
\end{align*}
The terms \eqref{Lp:numerical:line1}, \eqref{Lp:numerical:line3} and
\eqref{Lp:numerical:line4} are handled similarly as in the proof of
Lemma~\ref{Lp:stability}, using the Lipschitz continuity and Hölder
regularity of $\psi^m$,$u^m$, and $u_x^m$. 
For \eqref{Lp:numerical:line1} we obtain,
\[
    \eqref{Lp:numerical:line1}
    \leq C\|u_x^m\|_{\infty,L^\infty}^p\mathbb{E}\left[\left(\int_{0}^{T}\sup_{0\leq r\leq T}\left\lvert Y^{m,n}_r-Y^{m}_r\right\rvert\,ds\right)^p\right].
\]
When dealing with \eqref{Lp:numerical:line3}, BDG and Hölder continuity of $u_x^m$ give
\[
    \eqref{Lp:numerical:line3}
    \leq C\mathbb{E}\left[\left(\int_{0}^{T}\left\lvert Y^{m,n}_s-Y^{m}_s\right\rvert^{2\alpha}\,ds\right)^{\frac{p}{2}}\right].
\]
The second derivative term \eqref{Lp:numerical:line4} is treated using \eqref{propphi:C}, yielding
\[
    \eqref{Lp:numerical:line4}\le C\frac{2^p}{(\log\delta)^p}T^p\kappa^{(2\alpha-1)p}.
\]

Collecting all bounds and applying Lemma~\ref{GR_Lemma32} with $q=2$ and $\varrho=2\alpha$ to $
Z_s:=|Y^{m,n}_s-Y^{m}_s|$ gives,

\begin{align*}
    \mathbb{E}\left[\sup_{0\leq t\leq T}\left\lvert Y_t^{m,n}-Y_t^{m}\right\rvert^p\right]\leq&\, \kappa^p + \kappa^{(2\alpha-1)p} +  \int_{0}^{T}\mathbb{E}|Y_s^{m,n}-Y_s^{m}|ds\\
    +& T^p\|b_x^m\|_{\infty,L^\infty}^p\Bigg(2^{p-1}\|b^{m}\|_{\infty,L^\infty}^p\frac{T^p}{n^p} +2^{p-1}\frac{T^{\frac{p}{2}}}{n^{\frac{p}{2}}}\,\mathbb{E}\bigl[|W_1|^p\bigr]\Bigg)
\end{align*}
Using Lemma~\ref{DGI:Lemma62} and letting $\kappa\to0$ gives the stated estimate.
\end{proof}
\begin{proof}[Proof of Theorem~\ref{rate:Lp}]
    With Lemmas \ref{Lp:stability} and \ref{lp:numerical} in hand, the optimisation of $m(n)=n^\gamma$ to maximise the convergence rate of the bound in $n$ of \eqref{eq:decomposition} proceeds exactly as in the proof of \cite[Theorem 3.4]{chaparro_jaquez_convergence_2025}, since the slowest term of the numerical error is still $\|b_x^m\|_{\infty,L^\infty}n^{-\frac{1}{2}}$. After choosing $\gamma$ for maximum convergence speed we get the same rate \eqref{eq:rate}. Taking the $p$-th root on both sides produces the additional factor $1/p$ in the exponent.
\end{proof}

\subsection{An Equivalent Local Time-Based Approach}\label{sec:localtime}
Theorem~\ref{rate:Lp} can also be proven by using a similar technique to \cite{chaparro_jaquez_convergence_2025,de_angelis_numerical_2022}. Here we give the broad strategy while the details are left to the reader.
In \cite{chaparro_jaquez_convergence_2025,de_angelis_numerical_2022}, the $L^1$ rates are obtained using the Ito-Tanaka formula and then bounding the local time term using the following inequality.
\begin{lemma}{\cite[Lemma~5.1]{de_angelis_numerical_2022}}\label{lem:localtimel1}
    For any $\varepsilon\in(0,1)$ and any real-valued, continuous semimartingale $\hat{Y}$ we have 
    \begin{align}
        \mathbb{E}\big[L^0_t(\hat{Y})\big]\leq 4\varepsilon - 2\mathbb{E}&\left[\int_0^t \left(\mathbbm{1}_{\{\hat{Y}\in(0,\varepsilon)\}} + \mathbbm{1}_{\{\hat{Y}_s\geq\varepsilon\}}e^{1-\hat{Y}_s/\varepsilon}\right)\mathrm{d}\hat{Y}_s\right]\nonumber\\
        +\frac{1}{\varepsilon}\mathbb{E}&\left[\int_0^t\mathbbm{1}_{\{\hat{Y}>\varepsilon\}} e^{\{1-\hat{Y}_s/\varepsilon\}}\mathrm{d}\langle\hat{Y}\rangle_s\right]\label{localtimeformula}.
    \end{align}
\end{lemma}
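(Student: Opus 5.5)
The plan is to apply the Itô–Tanaka (Meyer) formula to a single explicit function $f_\varepsilon$ and read off $L^0_t(\hat Y)$ directly. The function is chosen so that its generalised second derivative consists of a Dirac mass at $0$, which produces the local time, plus an absolutely continuous part supported on $(\varepsilon,\infty)$. I would take
\[
f_\varepsilon(x):=\begin{cases}0, & x\le 0,\\ x, & 0<x<\varepsilon,\\ 2\varepsilon-\varepsilon e^{1-x/\varepsilon}, & x\ge\varepsilon.\end{cases}
\]

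First I check the regularity of $f_\varepsilon$. It is continuous: at $\varepsilon$ both branches equal $\varepsilon$. Its left derivative is
\[
(f_\varepsilon)'_-(x)=\mathbbm{1}_{(0,\varepsilon)}(x)+\mathbbm{1}_{[\varepsilon,\infty)}(x)e^{1-x/\varepsilon}.
\]
The two branches match at $x=\varepsilon$, where both derivatives equal $1$, so $f_\varepsilon$ is $C^1$ away from $0$. The function is a difference of convex functions, with second-derivative measure
\[
f_\varepsilon''(da)=\delta_0(da)-\tfrac{1}{\varepsilon}e^{1-a/\varepsilon}\mathbbm{1}_{(\varepsilon,\infty)}(a)\,da.
\]
Moreover $0\le f_\varepsilon\le 2\varepsilon$ everywhere.

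Next I apply the Meyer–Itô formula in the left-derivative form, paired with the right-continuous-in-$a$ version of the local time $L^a_t(\hat Y)$:
\[
f_\varepsilon(\hat Y_t)=f_\varepsilon(\hat Y_0)+\int_0^t (f_\varepsilon)'_-(\hat Y_s)\,d\hat Y_s+\tfrac12\int_{\mathbb{R}}L^a_t(\hat Y)\,f_\varepsilon''(da).
\]
The Dirac part contributes $\tfrac12 L^0_t$. For the absolutely continuous part I use the occupation times formula $\int g(a)L^a_t\,da=\int_0^t g(\hat Y_s)\,d\langle\hat Y\rangle_s$, which turns it into
\[
-\tfrac{1}{2\varepsilon}\int_0^t\mathbbm{1}_{\{\hat Y_s>\varepsilon\}}e^{1-\hat Y_s/\varepsilon}\,d\langle\hat Y\rangle_s .
\]
Solving for the local time gives
\[
L^0_t=2f_\varepsilon(\hat Y_t)-2f_\varepsilon(\hat Y_0)-2\int_0^t(f_\varepsilon)'_-(\hat Y_s)\,d\hat Y_s+\tfrac1\varepsilon\int_0^t\mathbbm{1}_{\{\hat Y_s>\varepsilon\}}e^{1-\hat Y_s/\varepsilon}\,d\langle\hat Y\rangle_s.
\]
Since $0\le f_\varepsilon\le2\varepsilon$, the boundary terms satisfy $2f_\varepsilon(\hat Y_t)-2f_\varepsilon(\hat Y_0)\le 4\varepsilon$. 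The integrand $(f_\varepsilon)'_-$ is exactly $\mathbbm{1}_{(0,\varepsilon)}+\mathbbm{1}_{[\varepsilon,\infty)}e^{1-\cdot/\varepsilon}$, so taking expectations yields \eqref{localtimeformula}.

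The main obstacle I expect is bookkeeping of conventions rather than analysis. One point is that the left derivative must vanish at $0$, which is why the indicator is on the open interval $(0,\varepsilon)$. The other is that this choice of left derivative must be paired with the right-continuous version $L^0$. A further point is that the expectations need to make sense. I would first localise with stopping times $\tau_N$ that make the local-martingale part of $\int (f_\varepsilon)'_-\,d\hat Y$ a true martingale; the integrand is bounded by $1$. I would then let $N\to\infty$, using monotone convergence for $L^0$ and for the $d\langle\hat Y\rangle$ term. The remaining $dA$ part of $\hat Y$ is understood in the sense in which the right-hand side is finite; otherwise the inequality holds trivially.
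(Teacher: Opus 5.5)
Your proof is correct. Applying the Meyer--Itô formula to $f_\varepsilon$ gives the local time, since the left derivative of $f_\varepsilon$ is exactly the integrand in the statement and its second-derivative measure is $\delta_0$ minus the exponential density on $(\varepsilon,\infty)$. Combined with the occupation times formula and the bound $0\le f_\varepsilon\le 2\varepsilon$, this is essentially the argument behind the cited \cite[Lemma~5.1]{de_angelis_numerical_2022}; the present paper does not prove the lemma itself and simply imports it.

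One small simplification: the localisation is unnecessary. Taking expectations directly in your pathwise identity for $L^0_t$ already works once $\int_0^t (f_\varepsilon)'_-(\hat Y_s)\,d\hat Y_s$ is integrable, which is what the right-hand side of the statement tacitly requires. Your localised route, by contrast, only produces the $dA$-part. That matches the stated form only when the martingale part has zero mean at time $t$.
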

 instead of approximating the absolute value using the smooth sequence defined in Subsection \ref{subsec:YW-technique}, an $L^p$ analogue of Lemma~\ref{lem:localtimel1} is needed. 
\begin{lemma}\label{localtimelp}
     For any $\varepsilon\in(0,1)$, $p\ge2$, and any real-valued, continuous semimartingale $\hat{Y}$ we have
     \begin{align*}
        \mathbb{E}\left[\sup_{0\leq t\leq T}(L^0_t(\hat{Y}))^p\right]&=\mathbb{E}\left[(L^0_T(\hat{Y}))^p\right]\\
        &\le 4^p3^{p-1}\varepsilon^p + 3^{p-1}2^p\mathbb{E}\left[\left(\int_0^T \left(\mathbbm{1}_{\{\hat{Y}\in(0,\varepsilon)\}} + \mathbbm{1}_{\{\hat{Y}_s\geq\varepsilon\}}e^{1-\hat{Y}_s/\varepsilon}\right)\mathrm{d}\hat{Y}_s \right)^p\right]\\
        &+\frac{3^{p-1}}{\varepsilon^{p}}\mathbb{E}\left[\left(\int_0^t\mathbbm{1}_{\{\hat{Y}>\varepsilon\}} e^{\{1-\hat{Y}_s/\varepsilon\}}\mathrm{d}\langle\hat{Y}\rangle_s\right)^p\right].
     \end{align*}
\end{lemma}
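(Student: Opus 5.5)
The equality $\mathbb{E}[\sup_{t\le T}(L^0_t(\hat Y))^p]=\mathbb{E}[(L^0_T(\hat Y))^p]$ is immediate, since $t\mapsto L^0_t(\hat Y)$ is non-decreasing and non-negative. For the inequality, the plan is to revisit the pathwise identity behind Lemma~\ref{lem:localtimel1} and raise it to the $p$-th power \emph{before} taking expectations, instead of taking expectations directly as in \cite{de_angelis_numerical_2022}.

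\textbf{Step 1 (auxiliary function).} Introduce
\[
g_\varepsilon(y):=\begin{cases}0,& y\le 0,\\ y,& 0<y<\varepsilon,\\ \varepsilon\bigl(2-e^{1-y/\varepsilon}\bigr),& y\ge\varepsilon.\end{cases}
\]
This function is $C^1$ except for a jump of size $1$ in its derivative at $0$; its derivative is $g_\varepsilon'(y)=\mathbbm{1}_{(0,\varepsilon)}(y)+\mathbbm{1}_{[\varepsilon,\infty)}(y)e^{1-y/\varepsilon}$. Its second distributional derivative consists of $\delta_0$ plus the absolutely continuous part $-\varepsilon^{-1}\mathbbm{1}_{(\varepsilon,\infty)}e^{1-y/\varepsilon}$, because the derivative is continuous at $\varepsilon$. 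In addition, $0\le g_\varepsilon\le 2\varepsilon$.

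\textbf{Step 2 (pathwise identity).} Apply the It\^o--Tanaka formula (Meyer--It\^o for the difference of convex functions) to $g_\varepsilon(\hat Y)$. This gives, almost surely for each $t$,
\[
\tfrac12 L^0_t(\hat Y)=g_\varepsilon(\hat Y_t)-g_\varepsilon(\hat Y_0)-\int_0^t g'_\varepsilon(\hat Y_s)\,d\hat Y_s+\frac{1}{2\varepsilon}\int_0^t\mathbbm{1}_{\{\hat Y_s>\varepsilon\}}e^{1-\hat Y_s/\varepsilon}\,d\langle\hat Y\rangle_s .
\]
Using $|g_\varepsilon(\hat Y_t)-g_\varepsilon(\hat Y_0)|\le 2\varepsilon$ and multiplying by $2$ yields the pathwise bound
\[
L^0_T\le 4\varepsilon+2\Bigl|\int_0^T g'_\varepsilon(\hat Y_s)\,d\hat Y_s\Bigr|+\frac1\varepsilon\int_0^T\mathbbm{1}_{\{\hat Y_s>\varepsilon\}}e^{1-\hat Y_s/\varepsilon}\,d\langle\hat Y\rangle_s .
\]
This is the pathwise version of \eqref{localtimeformula}, evaluated at $t=T$.

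\textbf{Step 3 (conclusion).} Raise the bound to the $p$-th power using $(a+b+c)^p\le 3^{p-1}(a^p+b^p+c^p)$, and then take expectations. The three resulting terms are exactly $4^p3^{p-1}\varepsilon^p$, the stochastic-integral term with factor $3^{p-1}2^p$, and the quadratic-variation term with factor $3^{p-1}\varepsilon^{-p}$. The middle term should be read with an absolute value, or equivalently as the $p$-th power of a non-negative quantity; for even $p$ nothing needs to be said.

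The only delicate point is Step 2. The identity must hold pathwise, not merely in expectation: \cite{de_angelis_numerical_2022} uses it after taking expectations, but it already holds almost surely. One must also check the sign and normalisation convention of $L^0$ (symmetric versus right local time) so that the jump of $g'_\varepsilon$ at $0$ produces the factor $\tfrac12 L^0$. With the right local time, which is the convention in which \eqref{localtimeformula} carries the constants $4\varepsilon$ and $2$, the jump of size $1$ contributes $\tfrac12L^0_t$. This matches the constants $4^p$, $2^p$ and $1$ in the statement. If the semimartingale is not integrable enough, the right-hand side is infinite and the inequality holds trivially.
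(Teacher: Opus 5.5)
Your proof is correct. Its skeleton matches the paper's: bound $L^0_T(\hat Y)$, raise the bound to the $p$-th power, and use $(a+b+c)^p\le 3^{p-1}(a^p+b^p+c^p)$. The difference is where the power is taken, and that difference matters.

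The paper raises \eqref{localtimeformula} itself to the $p$-th power. That inequality is between expectations, so doing this only bounds $(\mathbb{E}[L^0_T(\hat Y)])^p$. By Jensen's inequality that quantity is at most $\mathbb{E}[(L^0_T(\hat Y))^p]$, so bounding it does not control the $p$-th moment. Your Step 2 supplies the missing ingredient: the almost-sure Meyer--It\^o identity for $g_\varepsilon(\hat Y)$ that underlies \eqref{localtimeformula}. With it, the power is taken pathwise, before expectations. Your function $g_\varepsilon$, its left derivative, its second distributional derivative, and the resulting identity with $\tfrac12 L^0_t$ (right local time) are all correct.

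The pathwise route also gives two things for free.
\begin{itemize}
\item You need no integrability or true-martingale property of $\int_0^T g'_\varepsilon(\hat Y_s)\,d\hat Y_s$. Every term is non-negative once the absolute value is taken, so the inequality holds in $[0,\infty]$.
\item It shows that the middle term of the statement must carry an absolute value. The identity contains $-2\int_0^T g'_\varepsilon(\hat Y_s)\,d\hat Y_s$, which has no sign, and for non-integer $p$ its $p$-th power is not even defined.
\end{itemize}
You could also note that the upper limit $t$ in the last term of the statement should read $T$.
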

This result can be proven by raising \eqref{localtimeformula} to the power of $p$ and using that $(\sum_{i=1}^{n}a_i)^p\le n^{p-1}\sum_{i=1}^{n}a_i^p$ for $p\ge2$.
Applying Lemma~\ref{localtimelp} to $Y_t-Y^{m}_t $ and $Y^{m,n}_t-Y^{m}_t$ and following the same broad strategy as \cite{de_angelis_numerical_2022} allows to prove again Lemmas~\ref{Lp:stability} and \ref{lp:numerical} and the resulting $L^p$-sup convergence rate of Theorem~\ref{rate:Lp} without the Yamada-Watanabe approximation technique.

\section{Proof of Theorem~\ref{rate:L1}: $L^1$-sup Convergence}
\label{sec:l1}
We now adapt the arguments Section~\ref{sec:lp} to obtain convergence in the
$L^1$-$\sup$ norm, the broad strategy is the same, but while in both Lemma~\ref{Lp:stability} and Lemma~\ref{lp:numerical} the error bounds took a form that allowed the use of Lemma~\ref{GR_Lemma32}, in the analogous $L^1$-$\sup$ that follow this will not be available, to circumvent this Young's inequality and the $L^1$ results of
\cite{chaparro_jaquez_convergence_2025} and \cite{de_angelis_numerical_2022} will be used.

In both Lemmas~\ref{Lp:stability} and \ref{lp:numerical}, the final convergence bounds are proven applying Lemma~\ref{GR_Lemma32} to the integral terms appearing in \eqref{Lp:stability:line2} and \eqref{Lp:numerical:line3}. Following the same steps in the case of the $L^1$-sup error will give the same type of integral terms, but the coefficients will not satisfy the hypotheses of Lemma~\ref{GR_Lemma32}. To bound those terms, we follow the method employed in the proof of \cite[Theorem 2.11]{ngo_strong_2016} to get a rate similar to the one obtained in the $L^1$ case of \cite{de_angelis_numerical_2022} and \cite{chaparro_jaquez_convergence_2025}, but with an additional power of $2\alpha - 1$ in both sides of the decomposition.
\begin{lemma}[$L^1$-$\sup$ Stability Estimate]
    \label{l1:stability}
    Let Assumption \ref{hypothesis:b} hold and fix $\hat{\beta}\in(\beta,\tfrac12)$ as above and $\alpha\in(\tfrac12,1-\hat{\beta})$. Then, for all $m\in\mathbb{N}^+$ such that
    $\|b^m - b\|_{C_T\mathcal{C}^{-\hat{\beta}}}<1$ there exists a constant $C>0$ such that,
    \begin{equation*}
    \mathbb{E}\!\left[\sup_{0 \leq t \leq T}\left\lvert X^m_t-X_t \right\rvert\right]
    \leq C \, \left\lVert b^{m}-b \right\rVert_{C_T\mathcal{C}^{-\hat{\beta}}}^{(2\alpha-1)^2}.
    \end{equation*}
\end{lemma}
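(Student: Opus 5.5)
We follow the proof of Lemma~\ref{Lp:stability}, now with $p=1$. The first step is again a reduction to the transformed processes. Using the bi-Lipschitz property of $\psi^m$ together with Lemma~\ref{jaquez:L13}, we get
\[
\mathbb{E}\Bigl[\sup_{t\le T}|X^m_t-X_t|\Bigr]\le C\|b^m-b\|_{C_T\mathcal{C}^{-\hat\beta}}+C\,\mathbb{E}\Bigl[\sup_{t\le T}|Y^m_t-Y_t|\Bigr].
\]
We then bound $|Y^m_t-Y_t|\le \kappa+\varphi_{\delta,\kappa}(Y^m_t-Y_t)$ by \eqref{propphi:A} and apply It\^o's formula to $\varphi_{\delta,\kappa}$. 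This gives the same three terms \eqref{Lp:stability:line1}--\eqref{Lp:stability:line3}, now without the power $p$.

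Two of these terms are handled exactly as before, but in expectation rather than inside a Gronwall argument. For the drift term we use \eqref{jaquez:L141} and $|\varphi'_{\delta,\kappa}|\le1$. After taking expectations, this term is bounded by
\[
C\|b^m-b\|+\int_0^T\mathbb{E}|Y^m_s-Y_s|\,ds,
\]
and the integral is already controlled by the pointwise $L^1$ bound. That bound is Lemma~\ref{jaquez:P6} transferred to $Y$ through the Lipschitz property of $\phi^m$ and $\|u^m-u\|_\infty\lesssim\|b^m-b\|$, so the drift term is at most $C\|b^m-b\|^{2\alpha-1}$. No Gronwall step is needed, since we bound the supremum directly by an integral of pointwise expectations. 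The second-derivative term \eqref{Lp:stability:line3} is treated exactly as in Lemma~\ref{Lp:stability}: it is at most
\[
\frac{C}{\log\delta}\bigl(\kappa^{2\alpha-1}+\kappa^{-1}\|b^m-b\|^{2\alpha}\bigr).
\]
Taking $\delta=2$ and $\kappa=\|b^m-b\|$ makes it $O(\|b^m-b\|^{2\alpha-1})$.

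The main obstacle is the martingale term \eqref{Lp:stability:line2}, since with $p=1$ Lemma~\ref{GR_Lemma32} is not available ($p<q=2$). We apply BDG with exponent $1$ and then \eqref{jaquez:L142}:
\[
\mathbb{E}\Bigl[\Bigl(\int_0^T (u^m_x-u_x)^2ds\Bigr)^{1/2}\Bigr]\le C\|b^m-b\|^{\alpha}+C\,\mathbb{E}\Bigl[\Bigl(\int_0^T|Y^m_s-Y_s|^{2\alpha}ds\Bigr)^{1/2}\Bigr].
\]
Following the method of \cite[Theorem 2.11]{ngo_strong_2016}, we pull out a supremum:
\[
\int_0^T|Y^m_s-Y_s|^{2\alpha}ds\le \sup_s|Y^m_s-Y_s|^{2\alpha-1}\int_0^T|Y^m_s-Y_s|\,ds.
\]
Writing $V=\sup_{s\le T}|Y^m_s-Y_s|$ and $I=\int_0^T|Y^m_s-Y_s|\,ds$, the square root of this is $V^{\alpha-1/2}I^{1/2}$. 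Young's inequality with exponents $\tfrac{2}{2\alpha-1}$ and $\tfrac{2}{3-2\alpha}$ (the first is $>2$ since $\alpha<1$) then gives
\[
V^{\alpha-\frac12}I^{\frac12}\le \eta V+C_\eta I^{\frac{1}{3-2\alpha}}.
\]
This is where $p=1$ costs a power: $I^{1/(3-2\alpha)}$ is a sublinear power of $I$. We handle it with Jensen's inequality:
\[
\mathbb{E}\bigl[I^{1/(3-2\alpha)}\bigr]\le\bigl(\mathbb{E}I\bigr)^{1/(3-2\alpha)}\le C\|b^m-b\|^{(2\alpha-1)/(3-2\alpha)}.
\]
A slightly cruder choice, or equivalently applying H\"older directly as $\mathbb{E}[V^{\alpha-1/2}I^{1/2}]\le \mathbb{E}[V]^{\alpha-1/2}\,\mathbb{E}[I]^{1/2}\cdots$, gives the announced exponent $(2\alpha-1)^2$. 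Since $\|b^m-b\|<1$ and $(2\alpha-1)/(3-2\alpha)\ge(2\alpha-1)^2$ for $\alpha\in(\tfrac12,1)$, every exponent obtained is at least $(2\alpha-1)^2$, so this weakening is harmless.

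Finally, $\eta V$ must be absorbed into the left-hand side. This requires $\mathbb{E}V<\infty$, which holds because the coefficients of $Y$ and $Y^m$ are bounded. Choosing $\eta$ small and collecting all terms, every exponent of $\|b^m-b\|$ that appears is at least $(2\alpha-1)^2$. Since $\|b^m-b\|<1$, all terms are bounded by $C\|b^m-b\|^{(2\alpha-1)^2}$, which proves the lemma. The delicate point is the Young/H\"older bookkeeping in the martingale term, which is exactly what produces the extra factor $2\alpha-1$ in the exponent.
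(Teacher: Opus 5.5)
Your proposal is correct and follows the paper's proof: reduction to $Y^m-Y$, the Yamada--Watanabe/It\^o decomposition with $\kappa=\|b^m-b\|_{C_T\mathcal{C}^{-\hat\beta}}$, and, for the martingale term, BDG plus the device of \cite[Theorem~2.11]{ngo_strong_2016}: pull out a supremum, apply Young, absorb, then use Jensen and Lemma~\ref{jaquez:P6}. Your two bookkeeping differences both work in your favour: splitting $|Y^m_s-Y_s|^{2\alpha}\le V^{2\alpha-1}|Y^m_s-Y_s|$ instead of the paper's $V\,|Y^m_s-Y_s|^{2\alpha-1}$ gives the better exponent $(2\alpha-1)/(3-2\alpha)\ge(2\alpha-1)^2$, so your aside about a cruder H\"older step is unnecessary; and bounding the drift term by $\int_0^T\mathbb{E}|Y^m_s-Y_s|\,ds$ via Lemma~\ref{jaquez:P6} avoids the term $T\,\mathbb{E}[\sup_{s\le T}|Y^m_s-Y_s|]$, which the paper's bound produces but its final absorption of only $\tfrac12\,\mathbb{E}[\sup|Y^m-Y|]$ does not handle (that would need $T<\tfrac12$ or a Gronwall step).
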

\begin{proof}
The beginning of the proof is the same as the one for Lemma~\ref{Lp:stability}, starting with estimating the difference between $X^{m}$ and $X$ via the associated processes $Y^{m}$ and $Y$.
\begin{align}
    \mathbb{E}\!\left[\sup_{0\leq t\leq T}\lvert X^{m}_t - X_t\rvert\right] &= \mathbb{E}\!\left[\sup_{0\leq t\leq T}\lvert \psi^{m}(t,Y^{m}_t)-\psi\!\left(t,Y_t\right)\rvert\right]\nonumber\\
    &\leq \mathbb{E}\!\left[\sup_{0\leq t\leq T}\lvert \psi^{m}(t,Y^{m}_t)-\psi\!\left(t,Y^{m}_t\right)\rvert\right]
     + \mathbb{E}\!\left[\sup_{0\leq t\leq T}\lvert \psi\!\left(t,Y^{m}_t\right)-\psi\!\left(t,Y_t\right)\rvert\right]\nonumber\\
    &\leq 2c\lVert b^{m}-b\rVert_{C_T\mathcal{C}^{-\hat{\beta}}} + 2\mathbb{E}\!\left[\sup_{0\leq t\leq T}\lvert Y^{m}_t-Y_t\rvert\right],\label{splitstab}
\end{align}
where we used Lemma~\ref{jaquez:L14} in the second inequality. It remains to estimate the second term to obtain the desired stability bound. We have, as in Lemma~\ref{Lp:stability},
\begin{align}
    &\mathbb{E}\!\left[\sup_{0\leq t\leq T}\lvert Y^{m}_t-Y_t\rvert\right]\leq \mathbb{E}\!\left[\sup_{0\leq t\leq T}\bigl(\kappa+\varphi_{\delta,\kappa}(Y^{m}_t-Y_t)\bigr)\right]\leq \kappa\nonumber\\
    &\quad+\mathbb{E}\!\left[\sup_{0\leq t\leq T}\int_{0}^{t}\varphi'_{\delta,\kappa}(Y^{m}_s-Y_s)\Big(u^{m}\bigl(s,\psi^{m}(s,Y^{m}_s)\bigr)-u\!\left(s,\psi(s,Y_s)\right)\Big)\,ds\right]\label{L1:stability:line1}\\
    &\quad+\mathbb{E}\!\left[\sup_{0\leq t\leq T}\int_{0}^{t}\varphi'_{\delta,\kappa}(Y^{m}_s-Y_s)\Big(u_x^{m}(s,\psi^{m}(s,Y^{m}_s))-u_x\!\left(s,\psi(s,Y_s)\right)\Big)\,dW_s\right]\label{L1:stability:line2}\\
    &\quad+\mathbb{E}\!\left[\sup_{0\leq t\leq T}\int_{0}^{t}\varphi''_{\delta,\kappa}(Y^{m}_s-Y_s)\Big(u_x^{m}(s,\psi^{m}(s,Y^{m}_s))-u_x\!\left(s,\psi(s,Y_s)\right)\Big)^2\,ds\right].\label{L1:stability:line3}
\end{align}
Using Lemma~\ref{jaquez:L14} and $|\varphi'_{\delta,\kappa}|\le1$, we get
\begin{align*}
\eqref{L1:stability:line1}
&\leq \mathbb{E}\!\left[\sup_{0\leq t\leq T}\int_{0}^{t}\bigl(2c\lVert b^{m}-b\rVert_{C_T\mathcal{C}^{-\hat{\beta}}}+\lvert Y^{m}_s-Y_s\rvert\bigr)\,ds\right]\\
&\leq 2Tc\lVert b^{m}-b\rVert_{C_T\mathcal{C}^{-\hat{\beta}}}+\mathbb{E}\!\left[\int_{0}^{T}\sup_{0\leq r\leq T}\lvert Y^{m}_r-Y_r\rvert\,ds\right].
\end{align*}
For \eqref{L1:stability:line3}, using \eqref{propphi:C} and Lemma~\ref{jaquez:L14}:
\begin{align*}
\eqref{L1:stability:line3}
&\leq \mathbb{E}\!\Bigg[
    \sup_{0\leq t\leq T}
    \int_{0}^{t}
    \Big(
        C\lVert b^{m}-b\rVert_{C_T\mathcal{C}^{-\hat{\beta}}}
\\
&\qquad\qquad
        + 2^\alpha \lVert u_x\rVert_{C_T\mathcal{C}^{1+\alpha}}
          C^\alpha \lVert b^{m}-b\rVert_{C_T\mathcal{C}^{-\hat{\beta}}}^{2\alpha-1}
\\
&\qquad\qquad
        + 2^\alpha \lVert u_x\rVert_{C_T\mathcal{C}^{1+\alpha}}
          \lvert Y^{m}_s - Y_s\rvert^\alpha
    \Big)
    \varphi''_{\delta,\kappa}(Y^{m}_s-Y_s)\,ds
\Bigg]
\\
&\leq C\frac{2}{\kappa\log\delta}\lVert b^{m}-b\rVert_{C_T\mathcal{C}^{-\hat{\beta}}}^2
  + C\frac{2}{\kappa\log\delta}\lVert b^{m}-b\rVert_{C_T\mathcal{C}^{-\hat{\beta}}}^{2\alpha}
  + C\frac{2}{\log\delta}\kappa^{2\alpha-1}.
\end{align*}
Choosing $\kappa=\lVert b^{m}-b\rVert_{C_T\mathcal{C}^{-\hat{\beta}}}$ and fixing $\delta>1$ gives
\[
    \eqref{L1:stability:line3}\leq C\bigg(\lVert b^{m}-b\rVert_{C_T\mathcal{C}^{-\hat{\beta}}} + \lVert b^{m}-b\rVert_{C_T\mathcal{C}^{-\hat{\beta}}}^{2\alpha-1}\bigg).
\]
For the martingale term \eqref{L1:stability:line2}, BDG implies
\begin{align*}
    \eqref{L1:stability:line2}
    &\leq C\mathbb{E}\!\left[\Big(\int_{0}^{T}\Big(u_x^{m}(s,\psi^{m}(s,Y^{m}_s))-u_x(s,\psi(s,Y_s))\Big)^2ds\Big)^{1/2}\right]\\
    &\leq C\lVert b^{m}-b\rVert_{C_T\mathcal{C}^{-\hat{\beta}}}+C\lVert b^{m}-b\rVert_{C_T\mathcal{C}^{-\hat{\beta}}}^\alpha
      +\Big(\mathbb{E}\!\big[\int_{0}^{T}\lvert Y^{m}_s-Y_s\rvert^{2\alpha}ds\big]\Big)^{1/2}.
\end{align*}
Notice that this bound does not satisfy the hypotheses of Lemma~\ref{GR_Lemma32}, thus as said before, we use the methods used to prove \cite[Theorem 2.11]{ngo_strong_2016} and split the integral appearing in the last bound using the Young inequality $xy\leq\frac{x^2}{2C} + \frac{Cy^2}{2}$:
\begin{align*}
    \Big(\mathbb{E}\!\big[\int_{0}^{T}\lvert Y^{m}_s-Y_s\rvert^{2\alpha}ds\big]\Big)^{1/2}
    &\leq C\mathbb{E}\!\left[\Big(\sup_{0\leq t\leq T}\lvert Y^{m}_t-Y_t\rvert^{1/2}\Big)\Big(\int_{0}^{T}\lvert Y^{m}_s-Y_s\rvert^{2\alpha-1}ds\Big)^{1/2}\right]\\
    &\leq \frac{1}{2}\mathbb{E}\!\left[\sup_{0\leq t\leq T}\lvert Y^{m}_t-Y_t\rvert\right]
      +\frac{C}{2}\Big(\int_{0}^{T}\mathbb{E}\!\left[\lvert Y^{m}_s-Y_s\rvert\right]ds\Big)^{2\alpha-1}\\
      &\leq\frac{1}{2}\mathbb{E}\!\left[\sup_{0\leq t\leq T}\lvert Y^{m}_t-Y_t\rvert\right]
      +C\lVert b^{m}-b\rVert_{C_T\mathcal{C}^{-\hat{\beta}}}^{(2\alpha-1)^2}.
\end{align*}
Having used \ref{jaquez:P6} in the last inequality.

Collecting all terms and absorbing the $\frac{1}{2}\mathbb{E}[\sup|Y^m-Y|]$ into the left-hand side then gives
\[
    \mathbb{E}\!\left[\sup_{0\leq t\leq T}\lvert Y^{m}_t-Y_t\rvert\right]\leq C\bigg(\lVert b^{m}-b\rVert_{C_T\mathcal{C}^{-\hat{\beta}}}+\lVert b^{m}-b\rVert_{C_T\mathcal{C}^{-\hat{\beta}}}^{(2\alpha-1)^2}\bigg).
\]
Since by hypothesis we have that $\lVert b^{m}-b\rVert_{C_T\mathcal{C}^{-\hat{\beta}}}\leq 1$, the slowest converging term will be the one with the $2\alpha-1$ exponent since $\alpha<1-\beta$ implies $2\alpha-1< 1-2\beta<1$.
Together with \eqref{splitstab}, absorbing all higher powers into the leading constant gives the statement.
\end{proof}
\begin{lemma}[$L^1$-$\sup$ Numerical Estimate]
    \label{l1:numerical}
    Let Assumption \ref{hypothesis:b} hold and fix $\alpha\in(\tfrac12,1-\hat{\beta})$. Then
   \begin{equation*}
   \mathbb{E}\!\left[\sup_{0 \leq t \leq T}\left\lvert X^{m,n}_t-X^m_t \right\rvert\right]\leq C(A_m\,n^{-1}+ B_m\,n^{-\frac{1}{2}})^{2\alpha - 1},
   \end{equation*}
    where $A_m$ and $B_m$ are the same as in Lemma~\ref{DGI:Lemma62} and $C>0$ does not depend on $m$ or $n$.
\end{lemma}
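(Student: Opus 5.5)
We follow the proof of Lemma~\ref{l1:stability}, with $Y^m$ replacing $Y$ and $Y^{m,n}$ replacing $Y^m$, and use the dynamics \eqref{eq:Ymn}. Write $E_{m,n}:=A_m n^{-1}+B_m n^{-1/2}$.

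\textbf{Reduction to the transformed processes.} Both processes use the same map $\psi^m$, which is Lipschitz uniformly in $t$ and $m$. Hence
\[
\mathbb{E}\bigl[\sup_t|X^{m,n}_t-X^m_t|\bigr]\le C\,\mathbb{E}\bigl[\sup_t|Y^{m,n}_t-Y^m_t|\bigr].
\]
Unlike Lemma~\ref{l1:stability}, no $\|b^m-b\|$ term appears here.

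\textbf{Yamada--Watanabe expansion.} We bound $|Y^{m,n}_t-Y^m_t|\le\kappa+\varphi_{\delta,\kappa}(Y^{m,n}_t-Y^m_t)$ and apply It\^o's formula. This produces the four terms analogous to \eqref{Lp:numerical:line1}--\eqref{Lp:numerical:line4} with exponent $1$, and we treat them one at a time.

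1. The $u^m$ drift difference is at most $\|u^m_x\|_{\infty,L^\infty}\,\mathbb{E}\int_0^T\sup_{r\le s}|Y^{m,n}_r-Y^m_r|\,ds$. It can be handled by Gronwall at the end.

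2. For the discretisation term we use $|\varphi'_{\delta,\kappa}|\le1$ and the boundedness of $u^m_x$, then split it into a time part and a space part as in \eqref{Lp:numerical:line2:time}--\eqref{Lp:numerical:line2:space}.
   - The time part is bounded by $C[b^m]_{\frac12,L^\infty}n^{-1/2}$.
   - The space part is bounded by $\|b^m_x\|_{\infty,L^\infty}\int_0^T\mathbb{E}|X^{m,n}_s-X^{m,n}_{t_{k(s)}}|\,ds\le C\|b^m_x\|_{\infty,L^\infty}\bigl(\|b^m\|_{\infty,L^\infty}n^{-1}+n^{-1/2}\bigr)$.

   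Together these give at most $C\,E_{m,n}$. This uses the uniform-in-$m$ bound on $u^m_x$, and the extra $n^{-1/2}$ is absorbed into $B_m n^{-1/2}$, provided $B_m$ is bounded below.

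3. The $\varphi''$ term uses \eqref{propphi:C} and the $\alpha$-H\"older continuity of $u^m_x\circ\psi^m$, whose constant is uniform in $m$. It gives $C\kappa^{2\alpha-1}/\log\delta$, and this tends to $0$ as $\kappa\to0$ with $\delta$ fixed.

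4. For the martingale term, BDG and the H\"older bound give $C\,\mathbb{E}\bigl[(\int_0^T|Y^{m,n}_s-Y^m_s|^{2\alpha}ds)^{1/2}\bigr]$.

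\textbf{Main obstacle: the martingale term.} With exponent $1$ it does not meet the hypotheses of Lemma~\ref{GR_Lemma32}. We use the Young splitting of \cite[Theorem 2.11]{ngo_strong_2016}, as in Lemma~\ref{l1:stability}. We write $|Z|^{2\alpha}\le\sup|Z|\cdot|Z|^{2\alpha-1}$ and bound
\[
\mathbb{E}\Bigl[\sup_t|Z_t|^{1/2}\Bigl(\int_0^T|Z_s|^{2\alpha-1}ds\Bigr)^{1/2}\Bigr]\le\tfrac12\mathbb{E}\bigl[\sup_t|Z_t|\bigr]+C\,\mathbb{E}\int_0^T|Z_s|^{2\alpha-1}ds.
\]
By Jensen, since $2\alpha-1<1$, the last term is at most $C\bigl(\int_0^T\mathbb{E}|Z_s|\,ds\bigr)^{2\alpha-1}$. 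Transferring back from $X$ through the bi-Lipschitz property of $\phi^m$ and applying Lemma~\ref{DGI:Lemma62}, this is at most $C\,E_{m,n}^{2\alpha-1}$.

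\textbf{Conclusion.} We absorb $\frac12\mathbb{E}[\sup|Z|]$ into the left-hand side and let $\kappa\to0$. It is cleaner to run the whole argument on $[0,t]$, so that Gronwall applies to $t\mapsto\mathbb{E}[\sup_{s\le t}|Z_s|]$ and removes the term from point 1. This yields
\[
\mathbb{E}\bigl[\sup_t|Z_t|\bigr]\le C\bigl(E_{m,n}+E_{m,n}^{2\alpha-1}\bigr).
\]
Since $2\alpha-1<1$, the dominant term is $E_{m,n}^{2\alpha-1}$ whenever $E_{m,n}\le1$. If instead $E_{m,n}>1$, the left side is bounded anyway, since both processes have bounded moments uniformly in $m$ and $n$. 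In either case we obtain the claimed bound $C\,E_{m,n}^{2\alpha-1}$.

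The step needing the most care is checking that every constant is uniform in $m$. This concerns the H\"older norm of $u^m_x$ and the Lipschitz constants of $\psi^m$ and $\phi^m$, and follows from \cite[Proposition~4.13]{issoglio_pde_2024} and the uniform convergence $b^m\to b$ in $C_T\mathcal{C}^{-\hat\beta}$.
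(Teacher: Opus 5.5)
Your proposal follows the paper's proof essentially step by step. You use the same reduction through the common map $\psi^m$, the same four-term Yamada--Watanabe expansion, and the same Young splitting of the BDG term after \cite{ngo_strong_2016}, closed with Jensen and Lemma~\ref{DGI:Lemma62}.

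Your Gronwall step on $[0,t]$ fills a point the paper leaves implicit. The paper bounds \eqref{L1:numerical:line1} by $CT\,\mathbb{E}[\sup_{t\le T}|Y^{m,n}_t-Y^m_t|]$, which cannot simply be absorbed into the left-hand side. You could even avoid Gronwall: since $|\varphi'_{\delta,\kappa}|\le1$, that term is at most $C\int_0^T\mathbb{E}|Y^{m,n}_s-Y^m_s|\,ds\le C\,E_{m,n}$ directly by Lemma~\ref{DGI:Lemma62}, where $E_{m,n}=A_mn^{-1}+B_mn^{-1/2}$. Your proviso that $B_m$ be bounded below is also unnecessary, because the extra term is $\|b^m_x\|_{\infty,L^\infty}n^{-1/2}\le B_mn^{-1/2}$.

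The one step that fails is your treatment of the case $E_{m,n}>1$. The moments of $X^{m,n}$ are not bounded uniformly in $m$ and $n$. Take $X_0=0$, $n=1$, a time-independent $b(x)=|x|^{-\beta'}\chi(x)$ with $\beta'<\beta$ and $\chi$ a smooth cutoff, and $b^m=P_{1/m}b$.
\begin{itemize}
\item The scheme is explicit: $X^{m,1}_t=t\,b^m(0)+W_t$, with $b^m(0)\asymp m^{\beta'/2}$.
\item By contrast, $\mathbb{E}\sup_t|X^m_t|$ is bounded uniformly in $m$ through $X^m=\psi^m(\cdot,Y^m)$.
\item So the left-hand side is at least of order $m^{\beta'/2}$.
\item On the other side, $E_{m,1}\lesssim m^{(1+2\beta')/2}$, so $E_{m,1}^{2\alpha-1}=o(m^{\beta'/2})$ as soon as $(1+2\beta')(2\alpha-1)<\beta'$. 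This happens for admissible $\alpha$ close to $\tfrac12$.
\end{itemize}
Hence the pure-power bound genuinely fails for large $E_{m,n}$, and no argument can rescue that case.

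What your argument actually proves is $C\bigl(E_{m,n}+E_{m,n}^{2\alpha-1}\bigr)$. The same is true of the paper's proof, which silently drops the linear term coming from \eqref{L1:numerical:line2}. This gives the stated bound only when $E_{m,n}\le1$. That restriction is harmless for Theorem~\ref{rate:L1}, because $E_{m(n),n}\to0$ along $m(n)=n^\eta$. You should state it explicitly in place of your final uniform-moment claim.
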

\begin{proof}
    Recall that 
    \begin{equation}
        \label{l1:numerical:star}
        \mathbb{E}\!\left[\sup_{0\le t\le T}\left|X^{m,n}_t-X^m_t\right|\right]
\leq C\,\mathbb{E}\!\left[\sup_{0\le t\le T}\left|Y^{m,n}_t-Y^{m}_t\right|\right].
    \end{equation}
 As in the $L^p$-sup case, we bound $|Y^{m,n} - Y^{m}|$ with $\kappa+\varphi_{\delta,\kappa}$ using the It\^o Formula:
\begin{align}
    &\mathbb{E}\!\left[\sup_{0\leq t\leq T}\left\lvert Y^{m,n}_t-Y^{m}_t\right\rvert\right]
    \leq \mathbb{E}\!\left[\sup_{0\leq t\leq T}\left(\kappa+\varphi_{\delta,\kappa}\bigl(Y^{m,n}_t-Y^{m}_t\bigr)\right)\right]\leq \kappa
    \nonumber\\
    &\quad + \mathbb{E}\!\left[\sup_{0\leq t\leq T}\int_{0}^{t}\varphi'_{\delta,\kappa}\bigl(Y^{m,n}_s-Y^{m}_s\bigr)\bigl(u^{m}(s,\psi^{m}(s,Y^{m,n}_s))-u^{m}(s,\psi^{m}(s,Y^{m}_s))\bigr)\,ds\right]\label{L1:numerical:line1}\\
    &\quad + \mathbb{E}\!\left[\sup_{0\leq t\leq T}\int_{0}^{t}\varphi'_{\delta,\kappa}\bigl(Y^{m,n}_s-Y^{m}_s\bigr)
        \bigl(b^{m}\bigl(t_{k(s)},X_{t_{k(s)}}^{m,n}\bigr)-b^{m}\bigl(s,X_s^{m,n}\bigr)\bigr)\bigl(1+u^{m}_x(s,X_s^{m,n})\bigr)\,ds\right]\label{L1:numerical:line2}\\
    &\quad + \mathbb{E}\!\left[\sup_{0\leq t\leq T}\int_{0}^{t}\varphi'_{\delta,\kappa}\bigl(Y^{m,n}_s-Y^{m}_s\bigr)
        \bigl(u^{m}_x(s,\psi^{m}(s,Y^{m,n}_s))-u^{m}_x(s,\psi^{m}(s,Y^{m}_s))\bigr)\,dW_s\right]\label{L1:numerical:line3}\\
    &\quad + \mathbb{E}\!\left[\sup_{0\leq t\leq T}\int_{0}^{t}\varphi''_{\delta,\kappa}\bigl(Y^{m,n}_s-Y^{m}_s\bigr)
        \bigl(u^{m}_x(s,\psi^{m}(s,Y^{m,n}_s))-u^{m}_x(s,\psi^{m}(s,Y^{m}_s))\bigr)^2\,ds\right].\label{L1:numerical:line4}
\end{align}
We deal with each term separately, starting from \eqref{L1:numerical:line1}, where we use that $|\phi_{\delta,\kappa}|\leq1$ and that $u^m\circ \psi^m$ is Lipschitz: 
\begin{align*}
    \eqref{L1:numerical:line1}
    &\leq \mathbb{E}\!\left[\sup_{0\leq t\leq T}\int_{0}^{t} C\left\lvert Y^{m,n}_s-Y^{m}_s\right\rvert \,ds\right]
    \leq C\int_{0}^{T}\mathbb{E}\!\left[\sup_{0\leq r\leq T}\left\lvert Y^{m,n}_r-Y^{m}_r\right\rvert\right]\,ds.
\end{align*}
For \eqref{L1:numerical:line4}, using \eqref{propphi:C} and the spatial $\alpha$-H\"older regularity of $u^m_x$,
\begin{align*}
    \eqref{L1:numerical:line4}
    &\leq C\mathbb{E}\!\left[\sup_{0\leq t\leq T}\int_{0}^{t}
        \frac{2}{\left\lvert Y^{m,n}_s-Y^{m}_s\right\rvert\log\delta}\,
        \mathbbm{1}_{\{Y^{m,n}_s-Y^{m}_s\in\left[\frac{\kappa}{\delta},\kappa\right]\}}\cdot
        \left\lvert Y^{m,n}_s-Y^{m}_s\right\rvert^{2\alpha}\,ds\right]\\
    &\leq C\frac{2}{\log\delta}\,\kappa^{2\alpha-1}\,T.
\end{align*}
For the drift term we use Lemma~\ref{DGI:Lemma62}:
\begin{align*}
    \eqref{L1:numerical:line2}
    &\leq \frac{3}{2}\,\mathbb{E}\!\left[\int_{0}^{T}\left\lvert b^{m}\bigl(t_{k(s)},X_{t_{k(s)}}^{m,n}\bigr)-b^{m}\bigl(s,X_s^{m,n}\bigr)\right\rvert ds\right]\leq A_m\, n^{-1} + B_m\, n^{-\frac{1}{2}}.
\end{align*}
While for the martingale term we apply BDG and then Young's inequality:
\begin{align*}
    \eqref{L1:numerical:line3}
    &\leq C\,\mathbb{E}\!\left[\Big(\int_{0}^{T}\left\lvert Y^{m}_s-Y^{m,n}_s\right\rvert^{2\alpha} ds\Big)^{1/2}\right]\\
    &\leq C\,\mathbb{E}\!\left[\Big(\sup_{0\leq t\leq T}\left\lvert Y^{m}_t-Y^{m,n}_t\right\rvert^{1/2}\Big)
        \Big(\int_{0}^{T}\left\lvert Y^{m}_s-Y^{m,n}_s\right\rvert^{2\alpha-1} ds\Big)^{1/2}\right]\\
    &\leq \frac{1}{2}\,\mathbb{E}\!\left[\sup_{0\leq t\leq T}\left\lvert Y^{m}_t-Y^{m,n}_t\right\rvert\right]
    +\frac{C}{2}\left(\int_{0}^{T}\mathbb{E}\!\left[\left\lvert Y^{m}_s-Y^{m,n}_s\right\rvert\right] ds\right)^{2\alpha-1}\\
    &\leq\frac{1}{2}\,\mathbb{E}\!\left[\sup_{0\leq t\leq T}\left\lvert Y^{m}_t-Y^{m,n}_t\right\rvert\right] + C\bigg( A_m\, n^{-1} + B_m\, n^{-\frac{1}{2}}\bigg)^{2\alpha - 1}.
\end{align*}
Using again the $L^1$ estimate of Lemma~\ref{DGI:Lemma62}.

Collecting all terms, absorbing the $\frac{1}{2}\mathbb{E}[\sup|Y^{m,n}-Y^{m}|]$ to the left-hand side, and recalling \eqref{l1:numerical:star} gives the statement by taking the limit for $\kappa\to0$.
\end{proof}

\begin{proof}[Proof of Theorem \ref{rate:L1}]
    From Lemma~\ref{l1:stability} and Lemma~\ref{l1:numerical} we have, for a suitable constant $C>0$,
    \begin{equation}\label{stabnum}
      \mathbb{E}\Big[\sup_{0\le t\le T}\bigl|X^m_t-X_t\bigr|\Big]
      \le C\,\|b^m-b\|_{C_T\mathcal{C}^{-\hat{\beta}}}^{(2\alpha-1)^2},
    \end{equation}
    and
    \begin{equation}\label{numnum}
      \mathbb{E}\Big[\sup_{0\le t\le T}\bigl|X^{m,n}_t-X^m_t\bigr|\Big]
      \le C\,T\Big(A_m n^{-1} + B_m n^{-\frac{1}{2}}\Big)^{2\alpha-1},
    \end{equation}
By our mollification choice $b^m = P_{1/m}b$ and the Schauder estimates for the heat semigroup (Lemma~\ref{Schauder}), we gain spatial regularity from $-\beta$ up to $-\hat{\beta}$. More precisely, for our fixed $\hat{\beta}\in(\beta,\frac{1}{2})$ there exists $C>0$ such that
    \[
      \|b^m-b\|_{C_T\mathcal{C}^{-\hat{\beta}}}
      \le C\, m^{-\frac{\hat{\beta}-\beta}{2}}\|b\|_{C_T\mathcal{C}^{-\beta}}.
    \]
    The exponent $\frac{\hat{\beta}-\beta}{2}$ reflects the gain in spatial regularity from $-\beta$ to $-\hat{\beta}$ by heat-kernel smoothing.
    Introducing this estimate, we see that the stability error \eqref{stabnum} can be bound by
    \[
      n^{-\eta\frac{\hat{\beta}-\beta}{2}(2\alpha-1)^2},
    \]
by letting $m=m(n) = n^\eta$.

On the other hand, with the same $m$, the slowest term in the numerical error bound \eqref{numnum} is
\[
  n^{\left(\eta\frac{\varepsilon+\hat{\beta}+1}{2}-\frac{1}{2}\right)(2\alpha-1)},
\]
so to optimise the rate we balance these two exponents, i.e.
\begin{equation}\label{eq:balance-exponents}
  -\eta\frac{\hat{\beta}-\beta}{2}(2\alpha-1)^2
  = \left(\eta\frac{\varepsilon+\hat{\beta}+1}{2}-\frac{1}{2}\right)(2\alpha-1).
\end{equation}
Following \cite{chaparro_jaquez_convergence_2025}, we write $\alpha\in(0,\frac12)$ as $\alpha = 1-\hat{\beta}-\varepsilon$, 
with $\varepsilon\in(0,1 - \beta)$ and $\hat{\beta}\in(\beta,\tfrac12)$. Solving \eqref{eq:balance-exponents} for $\eta$ then yields
\begin{equation}\label{eq:eta-opt}
  \eta = \frac{1}{2\left[\frac{\varepsilon+\hat{\beta}+1}{2}
        +\left(\frac{1}{2}-\hat{\beta}-\varepsilon\right)^2\right]}.
\end{equation}
Substituting this choice of $\eta$ into the exponents of $m(n)$ shows that
\[
  \mathbb{E}\left[\sup_{0\leq t\leq T}\bigl|X_t^{m(n),n}-X_t\bigr|\right]
  \leq C\, n^{-r(\beta,\varepsilon)\left(\frac{1}{2}-\beta-\varepsilon\right)},
\]
concluding the proof.
\end{proof}
\section*{Acknowledgements}
The author is a member of INdAM-GNAMPA. The present research was first presented at the INdAM Workshop on Irregular Stochastic Analysis in June 2025.
\printbibliography
\end{document}